\title{On polynomial extension property in n-disc}
\author{Krzysztof Maciaszek}
\date{March 7, 2019}
\begin{document}
\newtheorem{thrm}{Theorem}
\newtheorem*{thrmm}{Main result}
\newtheorem{df}[thrm]{Definition}
\newtheorem{prop}[thrm]{Proposition}
\newtheorem{lemma}[thrm]{Lemma}
\newtheorem{sublemma}[thrm]{Sublemma}
\newtheorem{cor}[thrm]{Corollary}
\newtheorem{obs}[thrm]{Observation}
\newtheorem{rem}[thrm]{Remark}

\maketitle
\let\thefootnote\relax\footnote{The author was supported by the NCN grant 2017/26/E/ST1/00723.}
\begin{abstract}
In this note we show that an one-dimensional algebraic subset $\mathcal{V}$ of arbitrarily dimensional polidisc $\mathbb{D}^n$, which has the polynomial extension property, is a holomorphic retract.
\end{abstract}

\section{The main result}

Let $f:\mathcal {V}\to \mathbb{C}$ be a function on some non-empty subset $\mathcal{V}$ of $\mathbb{C}^n$. We say that $f$ is holomorphic (on $\mathcal{V}$) if for every point $\zeta \in \mathcal{V}$ there exists $U$ an open neighbourhood of $\zeta$ in $\mathbb{C}^n$ and there exists a holomorphic function $F:U\to \mathbb{C}$ such that $F|_{U\cap \mathcal{V}}=f|_{U\cap \mathcal{V}}$.

The algebra of bounded holomorphic functions on $\mathcal{V}$ is denoted by $H^\infty (\mathcal{V})$ and we equip this space with the supremum norm:
$$||f||_{H^\infty(\mathcal{V})}=\sup_{\zeta \in \mathcal{V}}|f(\zeta)|.$$
By $\mathcal{P}(\mathcal{V})$ we denote the subalgebra of polynomials of $H^\infty(\mathcal{V})$.

\begin{df}
Take $U$ a bounded domain in $\mathbb{C}^n$ and let $\mathcal{V}$ be a non-empty subset of $U$. We say that $\mathcal{V}$ has the polynomial extension property with respect to $U$ if for every polynomial $f\in \mathcal{P}(\mathcal{V})$ there exists a bounded holomorphic function $F\in H^\infty (U)$ satisfying $F|_{\mathcal{V}}=f$ and
$$||F||_{H^\infty(U)}=||f||_{H^\infty (\mathcal{V})}.$$
\end{df}

Recall that $\mathcal{V}$ a subset of $U$ is called holomorphic retract if there exists a holomorphic map $r:U\to \mathcal{V}$ such that $r|_{\mathcal{V}}=\text{id}|_\mathcal{V}$.

We call the set $\mathcal{V}$ algebraic in $\mathbb{D}^n$ if there are polynomials $p_1,...,p_s$ such that $\mathcal{V}=\mathbb{D}^n\cap \bigcap_{j=1}^s p_j^{-1}(0)$.

The main result of the paper is:
\begin{thrm}\label{thrm1}
Let $n\geq 3$. Suppose that the set $\mathcal{V} \subset \mathbb{D}^n$ is an one-dimensional, algebraic and that it has the polynomial extension property. Then it is a holomorphic retract.
\end{thrm}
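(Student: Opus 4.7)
The overall strategy is to realize the retraction in the form $r=\phi\circ F$, where $\phi\colon\mathbb{D}\to\mathcal{V}$ is a holomorphic parametrization of $\mathcal{V}$ and $F\colon\mathbb{D}^n\to\mathbb{D}$ is a bounded holomorphic function extending $\phi^{-1}$. Granting such $\phi$ and $F$, one has $r|_\mathcal{V}=\phi\circ\phi^{-1}=\mathrm{id}_\mathcal{V}$, so the problem reduces to producing the two maps.

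To produce $\phi$, I would first argue that $\mathcal{V}$ must be connected and in fact irreducible: a retract of the connected set $\mathbb{D}^n$ is connected, and one can deduce connectedness directly from PEP by testing against a polynomial separating two algebraic components (after extension, the maximum principle forces a contradiction). With $\mathcal{V}$ irreducible, one-dimensional, algebraic, and bounded, its normalization $\nu\colon\widetilde{\mathcal{V}}\to\mathcal{V}$ is a bordered Riemann surface of finite type. I would then argue that PEP forces $\widetilde{\mathcal{V}}\cong\mathbb{D}$: a normalization of higher genus or connectivity would carry bounded holomorphic functions whose Pick/Nevanlinna data on $\widetilde{\mathcal{V}}$ is incompatible with a norm-preserving polynomial extension from the polydisc algebra. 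Composing $\nu$ with a uniformization of $\widetilde{\mathcal{V}}$ yields $\phi\colon\mathbb{D}\to\mathcal{V}$, biholomorphic onto the regular locus.

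The second task is to produce $F$. Since $\phi^{-1}\in H^\infty(\mathcal{V})$ is in general not a polynomial, PEP does not apply to it directly, and I would upgrade PEP from $\mathcal{P}(\mathcal{V})$ to $H^\infty(\mathcal{V})$ by a weak-$*$ compactness argument: approximate $\phi^{-1}$ on compact subsets of $\mathcal{V}$ by polynomials $p_k\in\mathcal{P}(\mathcal{V})$ of norm at most one (via a Runge/Mergelyan-type theorem adapted to the algebraic curve $\mathcal{V}$), extend each $p_k$ to $P_k\in H^\infty(\mathbb{D}^n)$ by PEP with the same norm bound, and pass to a weak-$*$ limit $F$ in the (weak-$*$ compact) unit ball of $H^\infty(\mathbb{D}^n)$. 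Since weak-$*$ convergence in $H^\infty(\mathbb{D}^n)$ entails pointwise convergence, one gets $F|_{\mathcal{V}}=\phi^{-1}$ and $\|F\|\le 1$; the composition $r=\phi\circ F\colon\mathbb{D}^n\to\mathcal{V}$ is then the desired holomorphic retraction (it lands in $\mathbb{D}$ by the maximum principle since $F$ is non-constant).

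I expect the main obstacles to be the Riemann-surface classification of the first step and the polynomial density of the second. The latter is the genuinely delicate point, because $\mathcal{V}$ can meet the distinguished boundary $\mathbb{T}^n$ along sets that are not smooth curves, so formulating and proving the appropriate Runge-type statement on the algebraic curve $\mathcal{V}$ requires care. The hypothesis $n\geq 3$ likely enters at exactly this juncture, since the additional coordinate functions furnish more polynomial generators with which to separate branches of $\mathcal{V}$ and to realize the approximation, whereas the case $n=2$ is already covered by the distinct Agler--McCarthy approach built on Andô's dilation theorem.
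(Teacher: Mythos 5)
Your overall scheme ($r=\phi\circ F$ with $F$ extending $\phi^{-1}$) is a reasonable picture of why a one-dimensional retract should be an analytic disc, but the proposal has a genuine gap at its first step, and that step is in fact the entire content of the theorem. You assert that the polynomial extension property forces the normalization $\widetilde{\mathcal{V}}$ to be $\mathbb{D}$ because higher genus or connectivity would be \enquote{incompatible} with norm-preserving extension; no argument is given, and the conclusion you would need is strictly stronger than $\widetilde{\mathcal{V}}\cong\mathbb{D}$ in any case. The Neil parabola $\lbrace(\lambda^2,\lambda^3):\lambda\in\mathbb{D}\rbrace$ has normalization $\mathbb{D}$, yet $\phi^{-1}=z_2/z_1$ is not a locally uniform limit of polynomials near the singularity (any polynomial restricted to that curve is a power series in $\lambda$ with vanishing linear coefficient, and Cauchy estimates preserve this under uniform limits), so your Step 2 breaks down exactly when $\mathcal{V}$ fails to be smooth and single-sheeted over some coordinate. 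What must actually be proved is that $\mathcal{V}$ is a graph over one coordinate axis after a permutation, which by the Heath--Suffridge theorem (Theorem \ref{thrmk}) is \emph{equivalent} to being a retract; once that is known, $\phi^{-1}$ is simply $z_1|_{\mathcal{V}}$ and the retraction is $z\mapsto(z_1,f_2(z_1),\dots,f_n(z_1))$, making the weak-$*$/Mergelyan machinery unnecessary. The paper's proof consists almost entirely of establishing this graph structure, via Carath\'eodory extremals and the $k$-balanced pair machinery (Theorem \ref{carpick}, Lemmas \ref{balanced} and \ref{lem2}, Sublemmas \ref{lem3}--\ref{lem5}): a balanced pair in $\mathcal{V}$ forces either a whole irreducible component into a diagonal $\lbrace z_1=\dots=z_k\rbrace$ or forces $\overline{\mathcal{V}}$ to contain a one-parameter family of circles, which is impossible for a one-dimensional algebraic set; this is what rules out multiple sheets and singular branch points. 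None of this is present in, or follows from, your Pick/Nevanlinna heuristic.

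Two secondary inaccuracies: connectedness of $\overline{\mathcal{V}}$ (Lemma \ref{connectedV}) does not yield irreducibility --- excluding several smooth branches through a common point is the hardest part of Lemma \ref{lem0} and is handled by Sublemma \ref{lem5}, not by a maximum-principle separation argument --- and the hypothesis $n\ge 3$ does not enter through any approximation issue; it merely records that $n=2$ is the Agler--M$^{\text{c}}$Carthy theorem and $n=3$ the Kosi\'nski--M$^{\text{c}}$Carthy one, the present paper generalizing the latter's method.
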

This theorem is an $n$-dimensional generalization of the result by Kosiński and M$^c$Carthy (cf. \cite{KosMc}, Theorem 5.1), where the authors proved it in case of the tridisc. In what follows, we will adopt their method, but with some modifications. Before we get to the proof, we will need some extra tools. For recent developments in the studies of extension properties and for some motivations we refer to \cite{KosMc2} and to the Agler, Lykova and Young paper \cite{Agl}.

\section{Preliminaries}
Recall the automorphism group of polidisc 
\begin{multline*}
    \text{Aut}(\mathbb{D}^n)=\lbrace \mathbb{D}^n\ni (z_1,...,z_n)\mapsto (e^{i\alpha_1}h_{w_1}(z_{\sigma (1)}),...,e^{i\alpha_n}h_{w_n}(z_{\sigma (n)}))\in \mathbb{D}^n:\\
    \alpha_j \in \mathbb{R}, w_j \in \mathbb{D}, j=1,...,n, \sigma \in \Sigma\rbrace,
\end{multline*}
where $\Sigma$ is the group of all permutations of $n$-elements and each $h_{w_j}$ is a M\"obius map.

We denote by $m$ the M\"obius distance on the unit disc
$$m(z,w):=\left | \frac{z-w}{1-z\overline{w}}\right |, \quad z,w\in \mathbb{D}.$$

Let $U$ be a domain in $\mathbb{C}^n$ and let $k_U$ be the Kobayashi pseudodistance for $U$:
$$k_U(z,w)=\inf \lbrace p(\lambda,\mu): \text{ there is } \varphi\in \mathcal{O}(\mathbb{D},U) \text{ such that } \varphi(\lambda)=z, \varphi(\mu)=w\rbrace,$$
where $p(\lambda,\mu)=\text{tanh}^{-1}(m(\lambda,\mu))$ is the Poincar\'e distance on the unit disc $\mathbb{D}$ in $\mathbb{C}$. Denote by $c_U$ the Carath\'eodory pseudodistance $$c_U(z,w)=\sup \lbrace p(f(z),f(w)):f\in \mathcal{O}(U,\mathbb{D})\rbrace, \quad z,w\in U.$$

We say that the function $f \in \mathcal{O}(\mathbb{D},U)$ is a Kobayashi extremal map with respect to two distinct points $z,w\in U$ if there are $\lambda, \mu \in \mathbb{D}$ such that $f(\lambda)=z, f(\mu)=w$ and $$k_U(z,w)=p(\lambda, \mu).$$
Similarly we define the Carath\'eodory extremal map for a pair of points $z$ and $w$, as a map $g\in \mathcal{O}(U,\mathbb{D})$ which satisfies
$$c_U(z,w)=p(g(z),g(w)).$$

Finally, a complex geodesic is a range of Kobayashi extremal map. In case when the bounded domain $U$ is convex, it follows from the Lempert theorem (see e.g. \cite{Jar}, Theorem 11.2.1) that for any pair of points in $\mathbb{D}$ and for every Kobayashi extremal $f$ through these points there exists a Carath\'eodory extremal map $g$, being a left-inverse to $f$, which means that $g\circ f=\text{id}|_{\mathbb{D}}$.

\begin{df}
Take two points $z,w$ in polydisc $\mathbb{D}^n$ such that $z\neq w$. We say that $z$ and $w$ form a $k$-balanced pair, where $2\leq k \leq n$, if after a possible permutation of coordinates, we have
$$m (z_1,w_1)=...=m (z_k,w_k)\geq m(z_{j},w_{j}), \quad j\in \lbrace k+1,...,n\rbrace.$$
If $w=0$ then we will say that $z$ is a $k$-balanced point.
\end{df}

Recall that the subset $V$ of a domain $U$ is called \textit{relatively polynomially convex} if $\overline{V}$ is polynomially convex and $\overline{V}\cap U=V$. 

\begin{lemma}[cf. \cite{KosMc}, Lemma 2.2]\label{connectedV}
If $U$ is a bounded domain and $\mathcal{V}$ is a relatively polynomially convex subset of $U$ that has the polynomial extension property, then $\overline{\mathcal{V}}$ is connected.
\end{lemma}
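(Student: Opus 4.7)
The plan is to argue by contradiction. Suppose $\overline{\mathcal{V}}$ is disconnected; since it is compact we may write $\overline{\mathcal{V}}=K_1\sqcup K_2$ as a disjoint union of two non-empty compacta. The first observation I would make is that the relative polynomial convexity hypothesis $\overline{\mathcal{V}}\cap U=\mathcal{V}$ forces each $K_j\cap \mathcal{V}$ to be non-empty: if, say, $K_1\cap\mathcal{V}=\emptyset$, then $\mathcal{V}\subset K_2$, whence $\overline{\mathcal{V}}\subset K_2$, contradicting $K_1\neq\emptyset$ together with the disjointness of $K_1,K_2$.

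Next, I would use the polynomial convexity of $\overline{\mathcal{V}}$ to manufacture a polynomial whose restriction to $\mathcal{V}$ is essentially a non-trivial idempotent. Pick disjoint open neighbourhoods $\Omega_1\supset K_1$, $\Omega_2\supset K_2$ and let $h\equiv 0$ on $\Omega_1$, $h\equiv 1$ on $\Omega_2$; then $h\in\mathcal{O}(\Omega_1\cup\Omega_2)$ and, by the Oka--Weil theorem, there exist polynomials $p_n$ with $\|p_n-h\|_{\overline{\mathcal{V}}}\to 0$. In particular $\|p_n\|_{H^\infty(\mathcal{V})}\to 1$.

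Then I would invoke the polynomial extension property on each $p_n$ to obtain $F_n\in H^\infty(U)$ with $F_n|_{\mathcal{V}}=p_n|_{\mathcal{V}}$ and $\|F_n\|_{H^\infty(U)}=\|p_n\|_{H^\infty(\mathcal{V})}$. The family $(F_n)$ is uniformly bounded on $U$, so by Montel's theorem a subsequence converges locally uniformly to some $F\in H^\infty(U)$ with $\|F\|_{H^\infty(U)}\leq 1$. Since $F_n|_{\mathcal{V}}=p_n|_{\mathcal{V}}\to h$, the limit $F$ attains the value $1$ at any point of $K_2\cap\mathcal{V}\subset U$ and the value $0$ at any point of $K_1\cap\mathcal{V}\subset U$. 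Because $U$ is an open connected set, the maximum modulus principle then forces $F\equiv 1$, contradicting $F\equiv 0$ on the (non-empty) set $K_1\cap\mathcal{V}$.

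The step I expect to be the most delicate is the Oka--Weil approximation, since that is where the polynomial convexity hypothesis is really consumed; the subsequent normal-families compactness argument and the maximum principle are then essentially routine. The role of the relative polynomial convexity is in fact two-fold: it produces the polynomial approximants of the idempotent $h$, and it guarantees that both extreme values $0$ and $1$ of the limit $F$ are attained at interior points of $U$, which is what triggers the final contradiction.
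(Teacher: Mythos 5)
Your argument is correct, and it is essentially the standard one: the paper itself states this lemma without proof, citing \cite{KosMc}, Lemma 2.2, and the proof there proceeds exactly as you do — Oka--Weil approximation of a locally constant idempotent on the polynomially convex compactum, norm-preserving extension, a normal-families limit, and the maximum modulus principle on the domain $U$. No gaps; the only points worth making explicit are that $\mathcal{V}$ is dense in $\overline{\mathcal{V}}$ (so $\sup_{\mathcal{V}}|p_n|=\sup_{\overline{\mathcal{V}}}|p_n|$) and that both $K_1\cap\mathcal{V}$ and $K_2\cap\mathcal{V}$ lie inside $U$, both of which you use correctly.
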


\begin{thrm}[cf. \cite{KosMc}, Theorem 2.3]\label{carpick}
Let $U$ be bounded, and assume that $\mathcal{V}\subset U$ has the polynomial extension property. Let $\varphi$ be a Carath\'eodory-Pick extremal for $U$ for some data. If $\varphi|_{\mathcal{V}}$ is in $\mathcal{P}(\overline{\mathcal{V}})$, then $\overline{\varphi (\mathcal{V})}$ contains the unit circle $\mathbb{T}$.
\end{thrm}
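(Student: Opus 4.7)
The plan is to proceed by contradiction: assume that there exists $\eta\in\mathbb{T}\setminus\overline{\varphi(\mathcal{V})}$ and use the polynomial extension property to produce an $H^\infty(U)$ function interpolating the same Carathéodory--Pick data as $\varphi$ but with strictly smaller sup-norm, contradicting the extremality of $\varphi$. After normalization I may take $\|\varphi\|_{H^\infty(U)}=1$; I will work in the natural setting in which the interpolation data $(z_1,w_1),\ldots,(z_N,w_N)$ lies in $\mathcal{V}\times\mathbb{D}$, which is what makes the polynomial extension property interact with extremality. Writing $p:=\varphi|_{\mathcal{V}}\in\mathcal{P}(\overline{\mathcal{V}})$, a preliminary observation will be that $\|p\|_{\mathcal{V}}=1$: otherwise applying the polynomial extension property directly to $p$ produces $F\in H^\infty(U)$ with $F|_\mathcal{V}=p$, $\|F\|_\infty<1$, and $F(z_i)=p(z_i)=w_i$, already breaking extremality.

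Under the contradictory assumption, the heart of the proof will be to build a one-variable polynomial $q\in\mathbb{C}[w]$ satisfying
\[q(w_i)=w_i \quad (i=1,\ldots,N) \qquad \text{and} \qquad \sup_{\overline{p(\mathcal{V})}}|q|<1.\]
Once such $q$ is available, the composition $q\circ p$ is a polynomial on $\mathcal{V}$ of sup-norm strictly less than $1$; the polynomial extension property then delivers $F\in H^\infty(U)$ with $F|_\mathcal{V}=q\circ p$ and $\|F\|_\infty<1$, and since $F(z_i)=q(p(z_i))=q(w_i)=w_i$, $F$ interpolates the data with sub-extremal norm---the desired contradiction.

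To construct $q$, I would try the ansatz $q(w)=w+\epsilon P(w)g(w)$ with $P(w):=\prod_{i=1}^N(w-w_i)$, which makes $q(w_i)=w_i$ automatic, while $g$ is a polynomial to be chosen and $\epsilon>0$ is small. The expansion
\[|q(w)|^2-|w|^2 = 2\epsilon\operatorname{Re}\bigl(\bar w\,P(w)g(w)\bigr)+\epsilon^2|P(w)g(w)|^2\]
shows that the inequality $|q|<1$ is delicate only on the compact set $K:=\overline{p(\mathcal{V})}\cap\mathbb{T}$, where $|w|^2-1$ vanishes and $\bar w=w^{-1}$. The contradictory hypothesis makes $K$ a proper closed subset of $\mathbb{T}$, so $K$ has empty interior in $\mathbb{C}$ and $\mathbb{C}\setminus K$ is connected; Mergelyan's theorem therefore furnishes a polynomial $g$ uniformly approximating the continuous function $w\mapsto -w/P(w)$ on $K$ (well-defined because $w_i\in\mathbb{D}$ keeps $P$ non-vanishing on $\mathbb{T}$), which forces $\operatorname{Re}(w^{-1}P(w)g(w))\leq -\tfrac12$ on $K$. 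A standard compactness splitting of $\overline{p(\mathcal{V})}$ into a small neighbourhood of $K$ (where the first-order term dominates) and its complement (where $|w|\leq 1-\delta$) then extends $|q|<1$ from $K$ to all of $\overline{p(\mathcal{V})}$ for $\epsilon>0$ sufficiently small.

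The main obstacle is precisely this construction of $q$: producing a polynomial that is simultaneously contractive on a compact set touching $\mathbb{T}$ and pinned to $N$ prescribed values in $\mathbb{D}$ is only possible because the gap $\mathbb{T}\setminus K$ gives Mergelyan's theorem a connected complement to work with---that is where the hypothesis $\eta\in\mathbb{T}\setminus\overline{p(\mathcal{V})}$ enters decisively.
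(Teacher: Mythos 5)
This theorem is not proved in the paper at all --- it is quoted from \cite{KosMc} (Theorem 2.3) --- so the comparison must be with the argument there. Your proof is correct, and its global scheme coincides with the original one: normalise the extremal to norm $1$, suppose $\mathbb{T}\not\subset\overline{\varphi(\mathcal{V})}$, produce a one-variable polynomial $q$ that fixes the target values $w_i$ and satisfies $\sup_{\overline{\varphi(\mathcal{V})}}|q|<1$, then extend $q\circ\varphi|_{\mathcal{V}}$ by the polynomial extension property to contradict extremality. You are also right to insist that the interpolation nodes must lie in $\mathcal{V}$; this hypothesis is explicit in \cite{KosMc}, is tacit in the statement as reproduced here, and is exactly how Theorem \ref{carpick} is used in Lemmas \ref{balanced} and \ref{lem2}. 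Where you genuinely depart from the source is in the construction of $q$: Agler--McCarthy and Kosi\'nski--M$^{\mathrm c}$Carthy obtain it from a M\"obius-type deformation of the identity that contracts $\overline{\mathbb{D}}$ away from the omitted boundary point, whereas you use the first-order perturbation $q(w)=w+\epsilon P(w)g(w)$ with $g$ approximating $-w/P$ on $K=\overline{\varphi(\mathcal{V})}\cap\mathbb{T}$. This is a clean, self-contained alternative, and all the estimates check out. Two minor remarks: (i) since $-w/P(w)$ is holomorphic in a neighbourhood of $K$ (its poles sit at the $w_i\in\mathbb{D}$) and $\mathbb{C}\setminus K$ is connected, Runge's theorem already suffices --- Mergelyan is overkill; (ii) you should state explicitly that $\overline{\varphi(\mathcal{V})}\subset\overline{\mathbb{D}}$, which follows from $\|\varphi\|_{H^\infty(U)}=1$ and is what makes your two-region compactness splitting close.
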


\begin{lemma}\label{balanced}
Let $\mathcal{V}\subset \mathbb{D}^n$ be a relatively polynomially convex set that has the polynomial extension property. Suppose that $\mathcal{V}$ contains an $n$-balanced pair $(a,b)$. Then $\mathcal{V}$ contains an $n$-balanced disc of the form
$$\lbrace (h_{w_1}(\lambda),...,h_{w_n}(\lambda)):\lambda \in \mathbb{D}\rbrace,$$
where $h_{w_j}$ are M\"obius maps, $j=1,...,n$.
\end{lemma}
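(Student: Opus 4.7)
The plan is to reduce the pair $(a,b)$ to $(0,(r,\ldots,r))$ by an automorphism of $\mathbb{D}^n$, then exhibit a single polynomial Carath\'eodory-Pick extremal whose extremality under Theorem~\ref{carpick} rigidly pins down the diagonal circle in $\overline{\mathcal{V}}$; the diagonal disc will then drop out by taking the polynomial hull. First I would apply the automorphism $\psi(z)=(\tau_1(z_1),\ldots,\tau_n(z_n))$, where $\tau_j\in\mathrm{Aut}(\mathbb{D})$ sends $a_j$ to $0$ and $b_j$ to the positive real number $r=m(a_j,b_j)$, which is the common value of these distances by the $n$-balanced hypothesis. One has to check that $\psi(\mathcal{V})$ retains the polynomial extension property: since $\overline{\mathcal{V}}$ is polynomially convex, Oka--Weil approximates any element of $H^\infty(\mathcal{V})$ uniformly by polynomials, upgrading the polynomial extension property to the full $H^\infty$-extension property, which is patently biholomorphism-invariant and, once transported through $\psi$, recovers the polynomial extension property on $\psi(\mathcal{V})$.

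Next I would apply Theorem~\ref{carpick} to the polynomial $f(z)=\frac{1}{n}(z_1+\cdots+z_n)$ on $\mathbb{D}^n$, which maps into $\mathbb{D}$, sends $0$ to $0$ and $(r,\ldots,r)$ to $r$, and so realizes $c_{\mathbb{D}^n}(0,(r,\ldots,r))=p(0,r)$; thus $f$ is a Carath\'eodory-Pick extremal that is already polynomial. Theorem~\ref{carpick} forces $\overline{f(\psi(\mathcal{V}))}\supset\mathbb{T}$. For each $\zeta\in\mathbb{T}$ I would pick $z^{(k)}\in\psi(\mathcal{V})$ with $f(z^{(k)})\to\zeta$, extract a subsequence convergent in $\overline{\mathbb{D}^n}$ to some $z^\infty$, and observe that $z^\infty_1+\cdots+z^\infty_n=n\zeta$ combined with each $|z^\infty_j|\le 1$ forces equality in the triangle inequality, so $z^\infty_j=\zeta$ for every $j$. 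Hence the diagonal unit circle $C=\{(\zeta,\ldots,\zeta):\zeta\in\mathbb{T}\}$ lies in $\overline{\psi(\mathcal{V})}$.

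To close, I would compute the polynomial hull of $C$: the polynomials $z_i-z_j$ rule out off-diagonal points of $\overline{\mathbb{D}^n}$ and coordinate powers rule out points outside $\overline{\mathbb{D}^n}$, so the polynomial hull of $C$ is exactly the closed diagonal disc. Polynomial convexity of $\overline{\psi(\mathcal{V})}$ then places this closed disc inside $\overline{\psi(\mathcal{V})}$, and intersecting with $\mathbb{D}^n$ via relative polynomial convexity yields the open diagonal disc inside $\psi(\mathcal{V})$. Pulling back through $\psi^{-1}$ produces the claimed disc $\{(h_{w_1}(\lambda),\ldots,h_{w_n}(\lambda)):\lambda\in\mathbb{D}\}$. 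The step I expect to be the \emph{main obstacle} is the bookkeeping in the normalization: the polynomial extension property is not manifestly preserved by non-affine biholomorphisms of $\mathbb{D}^n$, so transferring it to $\psi(\mathcal{V})$ really does have to pass through the $H^\infty$-extension property, and that detour relies essentially on the relative polynomial convexity hypothesis.
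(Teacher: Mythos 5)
Your proposal follows essentially the same route as the paper's proof: normalize the $n$-balanced pair by a product of M\"obius maps, use the averaging polynomial $(z_1+\cdots+z_n)/n$ as a polynomial Carath\'eodory extremal, invoke Theorem~\ref{carpick} to capture the diagonal unit circle in the closure, and pass to polynomial hulls plus relative polynomial convexity to recover the open diagonal disc inside $\mathcal{V}$; you even supply details (the equality case of the triangle inequality that pins the circle to the diagonal, and the computation of the hull of the circle) that the paper leaves implicit. The one misstep is your justification for transporting the extension property through $\psi$: Oka--Weil does \emph{not} let you approximate arbitrary elements of $H^\infty(\mathcal{V})$ uniformly by polynomials (a uniform limit of polynomials on a diagonal disc, say, extends continuously to the closure, which a general $H^\infty$ function does not), so the claimed upgrade to the $H^\infty$-extension property fails as stated; the fact you actually need --- invariance of the polynomial extension property under $\mathrm{Aut}(\mathbb{D}^n)$, which follows by approximating $f\circ\psi$ (holomorphic on a neighbourhood of $\overline{\mathbb{D}^n}$ for $f$ a polynomial) uniformly by polynomials and taking a normal-families limit of the norm-preserving extensions --- is true and is simply asserted without proof at the start of Section~3 of the paper.
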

\begin{proof}
Take an automorphism that interchanges $b$ with $0$ and $a$ with some point $\alpha \in \mathbb{D}^n\setminus \lbrace 0 \rbrace$ such that $\alpha_1=...=\alpha_n$. Set $\varphi(z)=(z_1+...+z_n)/n$. Since $c_{\mathbb{D}^n}(z,w)=\max \lbrace m(z_j,w_j): j=1,...,n\rbrace$ (cf. Corollary 2.3.5. in \cite{Jar}), it follows that $\varphi$ is a Carath\'eodory extremal for $(a,b)$. By Theorem \ref{carpick}, the set $\overline{\mathcal{V}}$ contains the unit circle $\lbrace (\zeta,...,\zeta):\zeta \in \mathbb{T}\rbrace$, and the polynomial convexity of $\overline{\mathcal{V}}$ leads to the conclusion that $\mathcal{V}$ contains $n$-balanced disc $\lbrace (\lambda,...,\lambda): \lambda \in \mathbb{D}\rbrace$.
\end{proof}

Let $\pi_k:\mathbb{C}^n\to \mathbb{C}^k$ denote the natural projection on the first $k$-coordinates $z_1,...,z_k$.
\begin{lemma}\label{cosik}
If we take a point $a=(a',a'')\in \mathbb{C}^k\times \mathbb{C}^{n-k}$ which is an isolated point of $\pi_k^{-1}(a')\cap \mathcal{V}$, then we are able to find a polydisc $U=U'\times U'' \subset \mathbb{C}^k\times \mathbb{C}^{n-k}$ centered at $a$ and such that $\pi_k:U\cap \mathcal{V}\to U'$ is proper.
\end{lemma}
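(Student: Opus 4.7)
The plan is to build $U=U'\times U''$ in two stages: first choose $U''$ around $a''$ so small that the slice $(\{a'\}\times\overline{U''})\cap\mathcal{V}$ reduces to $\{a\}$, and then shrink $U'$ around $a'$ so that $\mathcal{V}$ stays off the side wall $\overline{U'}\times\partial U''$. Properness of $\pi_k|_{U\cap\mathcal{V}}$ will then follow by a short compactness argument.

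First I would use that $a$ is isolated in $\pi_k^{-1}(a')\cap\mathcal{V}$: there is an open neighbourhood of $a$ in $\mathbb{C}^n$ meeting $\pi_k^{-1}(a')\cap\mathcal{V}$ only at $a$. Inside this neighbourhood I would fit a polydisc $V'\times U''$ centred at $a$, with $\overline{V'\times U''}\subset\mathbb{D}^n$ and $(\{a'\}\times\overline{U''})\cap\mathcal{V}=\{a\}$. In particular, the compact set $\{a'\}\times\partial U''$ (topological boundary in $\mathbb{C}^{n-k}$) is disjoint from the relatively closed subset $\mathcal{V}$ of $\mathbb{D}^n$.

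Next I would shrink $V'$ to a polydisc $U'\subset V'$ centred at $a'$ with $(\overline{U'}\times\partial U'')\cap\mathcal{V}=\emptyset$. Such a $U'$ must exist by a standard sequential compactness argument: if it did not, then for every $j\geq 1$ one could find $(a'_j,v_j)\in\mathcal{V}$ with $|a'_j-a'|<1/j$ and $v_j\in\partial U''$; compactness of $\partial U''$ yields a subsequence $v_j\to v\in\partial U''$, hence $(a'_j,v_j)\to(a',v)\in\overline{V'\times U''}\subset\mathbb{D}^n$, and closedness of $\mathcal{V}$ in $\mathbb{D}^n$ would force $(a',v)\in\mathcal{V}\cap(\{a'\}\times\partial U'')$, contradicting the previous step.

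Setting $U=U'\times U''$, properness is then immediate. For any compact $K\subset U'$ one has
$$\pi_k^{-1}(K)\cap U\cap\mathcal{V}=(K\times U'')\cap\mathcal{V}=(K\times\overline{U''})\cap\mathcal{V},$$
the second equality using $K\times\partial U''\subset\overline{U'}\times\partial U''$, which contains no points of $\mathcal{V}$. The right-hand side is a closed subset of the compact set $K\times\overline{U''}\subset\mathbb{D}^n$, hence compact. I do not expect a serious obstacle; the only mildly subtle point is to recognise that properness is controlled by the side wall $\overline{U'}\times\partial U''$, rather than by the full topological boundary of $U$.
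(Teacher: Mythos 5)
Your proof is correct. The paper states this lemma without proof (it is the standard local properness criterion from Chirka's book), and your two-stage construction --- first shrinking $U''$ so that $\{a'\}\times\partial U''$ misses $\mathcal{V}$, then shrinking $U'$ so that the side wall $\overline{U'}\times\partial U''$ misses $\mathcal{V}$, with properness following because $\mathcal{V}$ is relatively closed in $\mathbb{D}^n$ and $\overline{U'\times U''}\subset\mathbb{D}^n$ --- is exactly the standard argument that the paper leaves implicit.
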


\begin{prop}[\cite{Chi}, Theorem 3.7]\label{proper1}
Let $\mathcal{X}$ be an analytic set in $\mathbb{C}^n$ and $a\in \mathcal{X}$ with $\text{dim}_a\mathcal{X}=k$. If there is a connected neighborhood $a\in U=U'\times U''$ such that $\pi_k:U\cap \mathcal{X}\to U'\subset \mathbb{C}^k$ is proper, then there exists an analytic set $\mathcal{Y}\subset U'$, $\text{dim}\mathcal{Y}<k$ and $p\in \mathbb{N}$ such that
\begin{enumerate}
    \item $\pi_k:U\cap \mathcal{X}\setminus \pi_k^{-1}(\mathcal{Y})\to U'\setminus \mathcal{Y}$ is a locally biholomorphic $p$-sheeted cover. In particular, $\# \pi_k^{-1}(z')\cap \mathcal{V}\cap U=p$ for all $z'\in U'\setminus \mathcal{Y}$.
    \item $\pi_k^{-1}(\mathcal{Y})$ is nowhere dense in $\mathcal{X}_{(k)}\cap U$, where $\mathcal{X}_{(k)}=\lbrace z\in \mathcal{X}:\text{dim}_z \mathcal{X}=k\rbrace$.
\end{enumerate}
\end{prop}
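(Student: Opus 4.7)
The plan is to prove this via the classical structure theorem for proper finite holomorphic projections of analytic sets. First, properness of $\pi_k$ on $U\cap \mathcal{X}$, combined with $\dim_a \mathcal{X}=k$ matching the dimension of the target $U'$, forces each fibre $\pi_k^{-1}(z')\cap \mathcal{X}\cap U$ to be a compact analytic subset of $\{z'\}\times U''\subset \mathbb{C}^{n-k}$, hence zero-dimensional and finite. After shrinking $U'$ if needed, $\pi_k$ is also surjective.

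Next I would build the discriminant $\mathcal{Y}$ in two stages. The lower-dimensional part $\mathcal{X}_{<k}\cap U=(\mathcal{X}\setminus \mathcal{X}_{(k)})\cap U$ is a closed analytic subset of $\mathcal{X}\cap U$ of dimension $<k$, and $\pi_k$ remains proper on it; by Remmert's proper mapping theorem its image is analytic in $U'$ of dimension $<k$. The same applies to $\mathrm{Sing}(\mathcal{X}_{(k)})\cap U$. Off the union $\mathcal{Y}_0$ of these two images, $\mathcal{X}\cap U$ is a complex manifold of pure dimension $k$, but $\pi_k$ may still ramify along the analytic set $C$ cut out on the regular part by the vanishing of the Jacobian determinant. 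Its closure $\overline{C}$ in $\mathcal{X}\cap U$ is analytic (the limit points lie in $\mathrm{Sing}(\mathcal{X})\cup \mathcal{X}_{<k}$) and of dimension $<k$, so $\pi_k(\overline{C})$ is again analytic by Remmert. I set $\mathcal{Y}=\mathcal{Y}_0\cup \pi_k(\overline{C})$.

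For part (1), over $U'\setminus \mathcal{Y}$ the set $\mathcal{X}\cap U$ is a smooth $k$-dimensional manifold and $\pi_k$ is a local biholomorphism; properness upgrades this to a finite covering. Since $\mathcal{Y}$ has codimension at least one in the connected set $U'$, the complement $U'\setminus \mathcal{Y}$ is connected, so the sheet number is a well-defined constant $p\in \mathbb{N}$. For part (2), if $\pi_k^{-1}(\mathcal{Y})$ had a non-empty interior in $\mathcal{X}_{(k)}\cap U$, some local $k$-dimensional piece of $\mathcal{X}$ would project into the at most $(k-1)$-dimensional $\mathcal{Y}$, contradicting the fact that a proper finite holomorphic map from a pure $k$-dimensional analytic set preserves the dimension of the image.

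The main obstacle is the repeated application of Remmert's theorem: at each step one must verify that the restriction of $\pi_k$ to the relevant analytic subset is still proper (which works here because we restrict to closed subsets of the properly mapped source), and one must be careful to take the analytic closure of $C$ inside $\mathcal{X}\cap U$ \emph{before} projecting, so that the resulting subset of $U'$ is genuinely analytic rather than merely constructible.
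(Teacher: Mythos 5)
The paper does not actually prove this proposition: it is quoted (as Theorem 3.7) from Chirka's \emph{Complex Analytic Sets}, so there is no internal argument to compare yours against. Judged on its own, your sketch follows the standard proof of the local structure theorem for analytic covers, and most of it is sound: finiteness of fibres from properness plus compactness of analytic subsets of $\lbrace z'\rbrace \times U''$, Remmert's proper mapping theorem applied to the lower-dimensional components and to the singular locus, local biholomorphy off the discriminant upgraded to a $p$-sheeted cover by properness and by connectedness of $U'\setminus \mathcal{Y}$, and the dimension argument for part (2).

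The genuine gap is the sentence asserting that $\overline{C}$ is analytic because ``the limit points lie in $\mathrm{Sing}(\mathcal{X})\cup \mathcal{X}_{<k}$''. Knowing where a closure accumulates does not make it analytic. The relevant extension theorem (Remmert--Stein) lets you extend an analytic set across an analytic obstacle only when the obstacle has dimension \emph{strictly less} than the set being extended; here $C$ has dimension at most $k-1$, while $\mathrm{Sing}(\mathcal{X})\cup\mathcal{X}_{<k}$ (and its image $\mathcal{Y}_0$) can also be $(k-1)$-dimensional, so the hypothesis fails and $\overline{C}$, respectively $\mathcal{Y}_0\cup\overline{\pi_k(C)}$, is a priori only a closed set. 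This is precisely where the textbook proofs, including Chirka's, switch tools: one first shows the cover is locally trivial off a closed \emph{thin} set $\sigma$, then forms the canonical defining polynomials $P_j(z',t)=\prod_{i=1}^{p}\bigl(t-w^{(i)}_j(z')\bigr)$ for $j>k$, whose symmetric-function coefficients are bounded and holomorphic off $\sigma$ and hence extend holomorphically across it by the Riemann removable singularity theorem; the zero set of the resulting discriminants, together with $\mathcal{Y}_0$, is the analytic $\mathcal{Y}$ you want. A second, smaller omission: you need to rule out that the Jacobian of $\pi_k$ vanishes identically on some $k$-dimensional component of the regular part (otherwise $C$ would have dimension $k$); this follows because a holomorphic map of generic rank less than $k$ on a $k$-dimensional manifold has positive-dimensional generic fibres, contradicting the finiteness of fibres you established at the start.
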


\begin{prop}[\cite{Chi}, Proposition 3.3.3.]\label{loj}
Let $G'\subset \mathbb{C}^k, G''\subset \mathbb{C}^m$ be open subsets. Suppose $V$ is analytic in $G=G'\times G''$, and take a projection $\pi_k:(z',z")\mapsto z'$. Assume that $V'=\pi_k(V)$ is a complex submanifold in $G'$ and that $\pi_k:V \to V'$ is one-to-one. Then $V$ is a complex submanifold in $G$ and $\pi_k:V\to V'$ is a biholomorphic map. 
\end{prop}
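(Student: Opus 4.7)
The plan is to work locally around an arbitrary point $a\in V$, first straightening $V'$ to a coordinate subspace, then invoking Proposition~\ref{proper1} to realize $V$ as a branched cover of $V'$ with exactly one sheet, and finally extending the resulting inverse across the branch locus via the Riemann removable singularity theorem so that $V$ is recognised locally as a graph.

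First I would fix $a=(a',a'')\in V$, set $\ell=\dim V'$, and use the submanifold structure of $V'$ to pick local holomorphic coordinates on a neighborhood of $a'$ in $G'$ that straighten $V'$ to $\{z'\in G': z'_{\ell+1}=\cdots=z'_k=0\}$. After this change $V$ is contained in $V'\times G''$, and the problem reduces to studying the projection $\pi_\ell$ onto the first $\ell$ coordinates, which is still one-to-one on $V$ onto an open subset of $V'$.

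Next, injectivity of $\pi_\ell|_V$ means that $a$ is isolated in $\pi_\ell^{-1}(a')\cap V$, so Lemma~\ref{cosik} supplies a product polydisc $U=U'\times U''$ around $a$ on which $\pi_\ell\colon U\cap V\to U'$ is proper. The standard fiber-dimension inequality, combined with surjectivity of $\pi_\ell$ onto the open piece $V'\cap U'$, gives that $V$ has pure dimension $\ell$ near $a$. Proposition~\ref{proper1} then produces a thin analytic set $\mathcal{Y}\subset U'$ and a $p$-sheeted locally biholomorphic cover
$$\pi_\ell\colon (U\cap V)\setminus\pi_\ell^{-1}(\mathcal{Y})\longrightarrow U'\setminus\mathcal{Y};$$
injectivity forces $p=1$, so $\pi_\ell$ is a biholomorphism off $\mathcal{Y}$ and its inverse has a holomorphic second component $\sigma\colon U'\setminus\mathcal{Y}\to U''$.

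The hard part will be the final extension step. Properness of $\pi_\ell$ on $U\cap V$ together with relative compactness of $U''$ forces $\sigma$ to be locally bounded on all of $U'$, after which the Riemann removable singularity theorem (applied across the thin analytic set $\mathcal{Y}$ in the manifold $U'$) extends $\sigma$ to a holomorphic map on $U'$. Its graph is an $\ell$-dimensional closed complex submanifold of $U$ that coincides with $U\cap V$ outside the lower-dimensional set $\pi_\ell^{-1}(\mathcal{Y})$; the identity principle for pure-dimensional analytic sets then forces the graph to equal $U\cap V$. This exhibits $V$ locally as a graph, hence as a complex submanifold, and yields $\sigma$ (lifted to $V$) as a holomorphic inverse of $\pi_k|_V$, which is the desired conclusion.
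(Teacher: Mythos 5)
The paper offers no proof of this proposition---it is quoted from Chirka \cite{Chi} as a black box---so there is nothing internal to compare your argument against; I will assess it on its own terms. Your overall route (localize at $a$, straighten $V'$, obtain a proper one-sheeted analytic cover from Lemma~\ref{cosik} and Proposition~\ref{proper1}, recover the inverse off the critical locus $\mathcal{Y}$, extend its second component across $\mathcal{Y}$ by the Riemann removable singularity theorem, and identify $U\cap V$ with the resulting graph) is the standard and essentially correct way to prove a statement of this type.

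The genuine gap is the clause asserting ``surjectivity of $\pi_\ell$ onto the open piece $V'\cap U'$.'' Nothing in the stated hypotheses yields this: after shrinking to the polydisc $U=U'\times U''$ furnished by Lemma~\ref{cosik}, the points of $V$ lying over $U'$ may simply escape $U''$, so $\pi_\ell(V\cap U)$ can be a proper analytic subset of $U'$, and then the equality $\dim_a V=\ell$ --- which Proposition~\ref{proper1} requires as a hypothesis, and on which your final identity-principle step also depends --- does not follow. This is not a removable technicality, because the proposition as literally quoted is false: take $G'=\mathbb{D}$, $G''=\mathbb{C}$ and $V=\lbrace zw=1\rbrace\cup\lbrace(0,0)\rbrace$. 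This $V$ is analytic in $\mathbb{D}\times\mathbb{C}$ and projects bijectively onto the submanifold $V'=\mathbb{D}$, yet $\pi_1|_V$ is not a homeomorphism (its inverse is discontinuous at $0$) and $V$ is not pure-dimensional at $(0,0)$; your localization at that point produces $V\cap U=\lbrace(0,0)\rbrace$, where the surjectivity claim visibly fails. What is missing, both from the quoted statement and from your argument, is properness of $\pi_k\colon V\to V'$, which is part of Chirka's actual hypotheses and is explicitly available in the paper's application (the inclusion (\ref{last}) makes $\pi_1$ proper on $\mathcal{V}$). Properness turns the continuous bijection $\pi_k|_V$ into a homeomorphism onto $V'$, so $\pi_\ell(V\cap U)$ contains a neighbourhood of $a'$; from that point on your surjectivity claim, the pure-dimensionality, and the remaining steps (one-sheetedness, boundedness of $\sigma$, extension, and the graph identification) all go through as you describe.
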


The next lemma is standard.
\begin{lemma}\label{identity}
Let $\mathcal{X},\mathcal{Y}$ be analytic where $\mathcal{X}$ is additionally irreducible. Then if $w\in \mathcal{X}\cap \mathcal{Y}$ and the germs $(\mathcal{X})_w$ and $(\mathcal{Y})_w$ are equal, then $\mathcal{X}\subset \mathcal{Y}$.
\end{lemma}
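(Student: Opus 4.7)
The plan is to reduce the statement to the identity principle for analytic subsets of a connected complex manifold, applied to the regular locus $\mathcal{X}_{\mathrm{reg}}$ of $\mathcal{X}$.

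First I would form the analytic set $\mathcal{Z}:=\mathcal{X}\cap \mathcal{Y}$ in the common ambient open set where both $\mathcal{X}$ and $\mathcal{Y}$ are analytic. The hypothesis that the germs at $w$ coincide gives
$$(\mathcal{Z})_w = (\mathcal{X})_w \cap (\mathcal{Y})_w = (\mathcal{X})_w,$$
so there exists an open neighbourhood $U$ of $w$ in the ambient space with $\mathcal{Z}\cap U = \mathcal{X}\cap U$. Viewed as a subset of $\mathcal{X}$, the set $\mathcal{Z}$ is analytic in $\mathcal{X}$ and it already contains the whole open neighbourhood $U\cap \mathcal{X}$ of $w$.

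Next I would invoke irreducibility. It is a standard fact that for an irreducible analytic set the regular locus $\mathcal{X}_{\mathrm{reg}}$ is a connected complex submanifold of the ambient space whose closure equals $\mathcal{X}$. Since $U\cap \mathcal{X}$ is non-empty and open in $\mathcal{X}$, and $\mathcal{X}_{\mathrm{reg}}$ is dense in $\mathcal{X}$, the intersection $U\cap \mathcal{X}_{\mathrm{reg}}$ is a non-empty open subset of $\mathcal{X}_{\mathrm{reg}}$ that lies in $\mathcal{Z}$. Hence $\mathcal{Z}\cap \mathcal{X}_{\mathrm{reg}}$ is an analytic subset of the connected complex manifold $\mathcal{X}_{\mathrm{reg}}$ that contains a non-empty open set; by the identity principle for analytic subsets of a connected complex manifold (a proper analytic subset is nowhere dense), we conclude $\mathcal{Z}\cap \mathcal{X}_{\mathrm{reg}}=\mathcal{X}_{\mathrm{reg}}$.

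Finally, since $\mathcal{Z}$ is closed in $\mathcal{X}$ and $\overline{\mathcal{X}_{\mathrm{reg}}}=\mathcal{X}$, passing to closures in $\mathcal{X}$ yields $\mathcal{X}\subset \mathcal{Z}\subset \mathcal{Y}$, which is the desired inclusion. The only place where irreducibility is essential is in guaranteeing that $\mathcal{X}_{\mathrm{reg}}$ is connected; everything else is bookkeeping. Because the statement is a textbook consequence of the identity principle, I do not expect any serious obstacle — the main care needed is to work with germs and then transfer the conclusion from $\mathcal{X}_{\mathrm{reg}}$ back to all of $\mathcal{X}$ via density.
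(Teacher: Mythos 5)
The paper gives no proof of this lemma at all (it is merely labelled ``standard''), and your argument --- reduce to the identity principle for analytic subsets of the connected complex manifold $\mathcal{X}_{\mathrm{reg}}$, then pass back to all of $\mathcal{X}$ by density of the regular locus and closedness of $\mathcal{X}\cap\mathcal{Y}$ --- is precisely the standard proof and is correct. One small observation: your argument only ever uses the germ inclusion $(\mathcal{X})_w\subset(\mathcal{Y})_w$ rather than equality, which is in fact all that is established at the point where the paper invokes the lemma (Case two of Lemma \ref{lem2} only shows $S\cap\mathbb{B}_w(t_0)\subset\lbrace z_1=z_{j_0}\rbrace$), so your proof covers the application as actually needed.
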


To obtain the main result, we will prove that $\mathcal{V}$ is a graph of holomorphic map. Then we will apply the following theorem by Heath and Suffridge \cite{HeSu}: 
\begin{thrm}\label{thrmk}
The set $\mathcal{V}$ is a holomorphic retract of $\mathbb{D}^n$ if and only if, after a permutation of coordinates, $\mathcal{V}$ is a graph of a map from $\mathbb{D}^p$ to $\mathbb{D}^{n-p}$ for some $0\leq p \leq n$.
\end{thrm}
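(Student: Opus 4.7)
The direction from graph to holomorphic retract is immediate: if, after a permutation of coordinates, $\mathcal{V} = \{(z', \phi(z')) : z' \in \mathbb{D}^p\}$ for some holomorphic $\phi : \mathbb{D}^p \to \mathbb{D}^{n-p}$, then $r(z', z'') := (z', \phi(z'))$ is a holomorphic retraction of $\mathbb{D}^n$ onto $\mathcal{V}$.

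For the converse, let $r : \mathbb{D}^n \to \mathbb{D}^n$ be a holomorphic retraction with image $\mathcal{V}$. Idempotence $r \circ r = r$, together with the holomorphic implicit function theorem applied at each $w \in \mathcal{V}$ to the decomposition $\mathbb{C}^n = \operatorname{range}(dr_w) \oplus \ker(dr_w)$, shows that $\mathcal{V}$ is a connected closed complex submanifold of $\mathbb{D}^n$; write $p := \dim \mathcal{V}$. Fix any $w_0 \in \mathcal{V}$ and conjugate $r$ by an element of $\mathrm{Aut}(\mathbb{D}^n)$ -- a coordinate-wise M\"obius map possibly followed by a coordinate permutation, which preserves the class of coordinate-subspace graphs -- to arrange $w_0 = 0$. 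The linearization $A := dr_0$ is then a rank-$p$ idempotent on $\mathbb{C}^n$, and its characteristic polynomial equals $\lambda^{n-p}(\lambda - 1)^p$. Comparing the coefficient of $\lambda^{n-p}$ with its standard expression as $(-1)^p$ times the sum of the principal $p \times p$ minors of $A$, we see that this sum is $1$; hence at least one principal $p$-minor is non-zero, and after a further permutation we may assume the block $A_{I,I}$ with $I = \{1, \ldots, p\}$ is invertible. The projection $\pi_p$ then restricts, via the implicit function theorem, to a local biholomorphism $\pi_p|_\mathcal{V}$ near $0$ and exhibits $\mathcal{V}$ locally as the graph of a holomorphic map from a polydisc neighborhood of $0 \in \mathbb{D}^p$ into $\mathbb{D}^{n-p}$.

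The heart of the proof is globalizing this local graph, i.e., showing that $\pi_p|_\mathcal{V} : \mathcal{V} \to \mathbb{D}^p$ is a biholomorphism onto all of $\mathbb{D}^p$. My plan proceeds in three stages: (i) show that the ``good transversality'' $T_w\mathcal{V} \cap (\{0\}^p \times \mathbb{C}^{n-p}) = \{0\}$ holding at $w_0 = 0$ in fact persists at every $w \in \mathcal{V}$; its failure locus is a proper analytic subset of $\mathcal{V}$, which one rules out by combining the retraction identity $r \circ r = r$ with the irreducibility of $\mathcal{V}$ and Lemma~\ref{identity}; (ii) use Schwarz--Pick rigidity, applied to the bounded holomorphic self-map $z' \mapsto \pi_p(r(z', 0))$ of $\mathbb{D}^p$ (which fixes $0$ with invertible differential $A_{I,I}$), together with the retraction identity, to establish that $\pi_p|_\mathcal{V}$ is proper, hence a finite-sheeted holomorphic covering of $\mathbb{D}^p$; (iii) invoke simple-connectedness of $\mathbb{D}^p$ and connectedness of $\mathcal{V}$ to conclude that the covering degree is $1$, so that $\pi_p|_\mathcal{V}$ is a biholomorphism. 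The last $n - p$ coordinates of $(\pi_p|_\mathcal{V})^{-1}$ then provide the desired holomorphic $\phi : \mathbb{D}^p \to \mathbb{D}^{n-p}$ whose graph equals $\mathcal{V}$. The hardest step is (ii): local biholomorphicity at a single point does not forbid $\pi_p(\mathcal{V})$ from being a strict open subset of $\mathbb{D}^p$, nor does it prevent two distinct points of $\mathcal{V}$ from sharing the same first $p$ coordinates, and it is the polydisc-specific rigidity that only the retraction structure of $r$ (as opposed to an arbitrary holomorphic self-map) makes available which lets us rule both out.
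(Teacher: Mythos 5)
Note first that the paper does not prove Theorem~\ref{thrmk} at all: it is quoted verbatim from Heath and Suffridge \cite{HeSu}, so there is no internal proof to compare against. Judged on its own, your proposal gets the easy direction and the local statement right. The observation that the idempotent $A=dr_0$ has characteristic polynomial $\lambda^{n-p}(\lambda-1)^p$, hence the sum of its principal $p\times p$ minors equals $1$, so that some coordinate projection $\pi_p$ is a local biholomorphism of $\mathcal{V}$ near $0$, is correct and is a clean way to obtain the local graph structure.

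The problem is that everything after that -- the actual content of the theorem -- is asserted rather than proved, and the mechanisms you name do not obviously deliver it. In step (i), the failure locus of transversality is indeed a proper analytic subset of $\mathcal{V}$, but proper analytic subsets of a connected $p$-dimensional complex manifold are in general nonempty (only for $p$ equal to the dimension of the subvariety plus one more does ``proper'' force ``discrete'', and even discrete is not empty); Lemma~\ref{identity} concerns equality of germs of irreducible analytic sets and gives you no handle on this locus, so as written step (i) is circular: global transversality to $\lbrace 0\rbrace^p\times\mathbb{C}^{n-p}$ is essentially equivalent to the graph statement you are trying to prove. In step (ii), the map $z'\mapsto \pi_p(r(z',0))$ fixes $0$ with derivative $A_{I,I}$, which is invertible but typically not unitary (for the diagonal of $\mathbb{D}^2$ with $r(z,w)=((z+w)/2,(z+w)/2)$ one gets the map $z\mapsto z/2$), so Schwarz--Pick/Cartan rigidity says nothing here; and properness of $\pi_p|_{\mathcal{V}}$ is exactly the statement that points of $\mathcal{V}$ cannot escape to $\partial\mathbb{D}^n$ in the last $n-p$ coordinates while the first $p$ stay compactly inside $\mathbb{D}^p$, which requires a genuine argument (e.g.\ via the Kobayashi distance of the retract, or via the linearization that Heath and Suffridge actually use: they show the retract is equivalent to the fixed-point set of the linear retraction $dr_0$ and then classify linear idempotent self-maps of the polydisc). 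You have correctly identified where the difficulty lies, but identifying it is not the same as resolving it; as it stands the proposal is a plan with its central steps missing.
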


\section{Proof of the main result}
For any regular point $w$ there exists a unique irreducible component containing it. Recall that the properties of having the polynomial extension property, connectedness, being relatively polynomially convex and being a retract are all invariant under application of an automorphism of $\mathbb{D}^n$. Hence, for given $k$-balanced pair of points $(z,w)$ which is not $(k+1)$-balanced we can assume that $z=0$ and $w=(w_1,...,w_1,w_{k+1},...,w_n)$ with $|w_1|>|w_j|$ for $j>k$.

\begin{lemma}\label{lem2}
Assume that $\mathcal{V}$ is an algebraic and one-dimensional set in $\mathbb{D}^n$ that satisfies the polynomial extension property. Suppose that $\mathcal{V}$ contains both $0$ and $w=(w_1,w_1,...,w_1,w_{k+1},...,w_n)$, where $w$ is a regular point of $\mathcal{V}$ with $|w_1|>|w_j|$ for $j=k+1,...,n$ and $2\leq k \leq n-1$. Then the irreducible component $S\subset \mathcal{V}$ passing through $w$ is contained in $\lbrace z_1=z_2=...=z_k \rbrace$.
\end{lemma}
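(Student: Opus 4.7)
The plan is to adapt the strategy behind Lemma~\ref{balanced} to the partially-balanced setting: first produce a rich family of Carath\'eodory-Pick extremals for the pair $(0, w)$, use Theorem~\ref{carpick} to force $\overline{\mathcal{V}}$ to contain a large subset of $\partial \mathbb{D}^n \cap L$ where $L := \{z \in \mathbb{C}^n : z_1 = z_2 = \cdots = z_k\}$, exploit the polynomial convexity of $\overline{\mathcal{V}}$ to extract a one-dimensional analytic piece inside $\mathcal{V} \cap L$ through $w$, and finally invoke Lemma~\ref{identity} to conclude $S \subset L$.

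Since $c_{\mathbb{D}^n}(0, w) = \max_j m(0, w_j) = |w_1|$ with the maximum attained on the first $k$ coordinates, every convex combination $\varphi_t(z) := \sum_{j=1}^k t_j z_j$ with $t_j \geq 0$, $\sum_j t_j = 1$ is a Carath\'eodory-Pick extremal for $(0, w)$: by convexity it has sup-norm at most one on $\mathbb{D}^n$, it vanishes at $0$, and it sends $w$ to $w_1$. Each $\varphi_t$ is a polynomial, so Theorem~\ref{carpick} yields $\overline{\varphi_t(\mathcal{V})} \supset \mathbb{T}$. Strict convexity of $\overline{\mathbb{D}}$ then shows that if $t$ has full support on $\{1, \ldots, k\}$ and $\varphi_t(z) = \xi \in \mathbb{T}$ for some $z \in \overline{\mathbb{D}^n}$, then necessarily $z_1 = z_2 = \cdots = z_k = \xi$. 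Combining these observations, the set $E := \overline{\mathcal{V}} \cap L \cap \{|z_1| = 1\}$ is nonempty for every $\xi \in \mathbb{T}$ and projects onto $\mathbb{T}$ via $z \mapsto z_1$.

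Now $\overline{\mathcal{V}}$ is polynomially convex and the polynomials $z_i - z_j$ ($i, j \leq k$) vanish on $E$, so the polynomially convex hull satisfies $\widehat{E} \subset \overline{\mathcal{V}} \cap L$. Because $E$ projects onto $\mathbb{T}$ and $\mathcal{V}$ sits inside an algebraic curve of dimension one, a dimension count forces at least one irreducible component of that curve to lie inside $L$, so $\mathcal{V} \cap L$ contains a genuine one-dimensional analytic piece $T$. The concluding step is to verify $w \in T$: a disc-filling argument applied to the polynomially convex hull of $E \cup \{w\}$ should produce a holomorphic disc in $L$ passing through $w$ and contained in $\overline{\mathcal{V}}$. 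Since $w$ is a regular point of $\mathcal{V}$ belonging to the single irreducible component $S$, the germs of $S$ and this disc at $w$ coincide, and Lemma~\ref{identity} applied to $\mathcal{X} = S$ (irreducible) and $\mathcal{Y} = L$ yields $S \subset L$, which is exactly the claim $z_1 = z_2 = \cdots = z_k$ on $S$.

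I expect the main obstacle to be precisely this final disc-filling step. In Lemma~\ref{balanced}, each fiber $\varphi^{-1}(\xi) \cap \overline{\mathbb{D}^n}$ reduces to a single point $(\xi, \ldots, \xi)$, so $\overline{\mathcal{V}}$ is forced to contain an honest geometric circle whose polynomially convex hull is the classical analytic disc through the origin. In our situation $\varphi_t^{-1}(\xi) \cap \overline{\mathbb{D}^n}$ is an $(n-k)$-dimensional polydisc, so the selection of boundary points parametrized by $\mathbb{T}$ must be arranged carefully, presumably exploiting the algebraic structure of $\mathcal{V}$ and possibly a further application of the polynomial extension property, to guarantee that the resulting filling disc passes through the particular interior point $w$ rather than missing it.
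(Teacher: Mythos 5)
Your opening step is sound: since $w_1=\dots=w_k$, each full-support convex combination $\varphi_t(z)=\sum_{j=1}^k t_jz_j$ is indeed a Carath\'eodory extremal for $(0,w)$, and Theorem \ref{carpick} plus strict convexity of $\overline{\mathbb{D}}$ forces $\overline{\mathcal{V}}$ to contain, over every $\xi\in\mathbb{T}$, a point with $z_1=\dots=z_k=\xi$. But the rest has two genuine gaps. First, the ``dimension count'' only shows that the ambient algebraic set meets $L$ in a positive-dimensional component carrying the uncountable set $E$; such a component can lie entirely in the boundary of the polydisc (for instance a curve in $L$ with $z_{k+1}\equiv 1$ contains a point of $\overline{\mathbb{D}^n}\cap L$ over every $\xi\in\mathbb{T}$ yet is disjoint from $\mathbb{D}^n$), so you do not yet obtain a one-dimensional piece of $\mathcal{V}$ itself inside $L$; the same example shows $\widehat{E}$ may miss $\mathbb{D}^n$ entirely. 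Second, and fatally, even granting such a piece $T\subset\mathcal{V}\cap L$, nothing connects $T$ to $w$: the lemma is about the \emph{specific} irreducible component $S$ through $w$, and the ``disc-filling through $w$'' step you flag as the main obstacle is precisely the content of the lemma. The polynomial hull of $E\cup\{w\}$ has no reason to contain an analytic disc through $w$, and no hull construction starting from boundary data can single out $S$ among the components of $\mathcal{V}$.

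The paper's argument runs in the opposite direction and is local at $w$. Using that $w$ is a regular point, it writes $S$ near $w$ as $\{(f_1(\lambda),\dots,f_n(\lambda)):\lambda\in\mathbb{D}(\epsilon)\}$ with $f_1(0)=\dots=f_k(0)=w_1$ dominating the remaining coordinates in modulus, and by a connectedness argument finds indices $p\neq q\leq k$ and a sequence $\nu_m\to 0$ along which $|f_p(\nu_m)|=|f_q(\nu_m)|$ is maximal among the first $k$ coordinates. This yields a dichotomy. If $f_p(\nu_m)=\omega_m f_q(\nu_m)$ for infinitely many distinct $\omega_m\in\mathbb{T}$, then each $(z_p+\overline{\omega_m}z_q)/2$ is a Carath\'eodory extremal through $0$ and a point of $S$, so Theorem \ref{carpick} forces $\pi_2(\overline{\mathcal{V}})$ to contain $\{(\xi,\omega_m\xi):\xi\in\mathbb{T},\ m\in\mathbb{N}\}$, i.e.\ infinitely many points over each $\xi$, contradicting that $\mathcal{V}$ is one-dimensional and algebraic. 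Otherwise $f_p\equiv f_q$ by the identity principle, and Lemma \ref{identity} gives $S\subset\{z_p=z_q\}$; iterating over all indices and falling back to the first case when the iteration stalls yields $S\subset\{z_1=\dots=z_k\}$. The regularity of $w$ and the finiteness of fibers of an algebraic curve are the essential inputs, and neither appears in your proposal.
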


\begin{proof} Suppose that $S$ is not contained in $\lbrace z_1=...=z_k \rbrace$.

Since $w$ is a regular point, by Lemma \ref{connectedV} it is not isolated in $\mathcal{V}$. Hence locally around $w$ the set $S$ can be described by holomorphic functions
$$\lbrace (f_1(\lambda),...,f_n(\lambda)):\lambda \in \mathbb{D}(\epsilon)\rbrace, \quad \text{where }\epsilon>0.$$
Here we can assume that $f_1(0)=...=f_k(0)=w_1$ and $f_p(0)=w_p$ whenever $p\in \lbrace k+1,...,n \rbrace$. Decreasing $\epsilon$ we may also assume that the above set is contained in $\lbrace |z_{k+1}|,...,|z_n|<|z_1|,...,|z_k|\rbrace$. Define \begin{equation*}
    u(\lambda)=\max_{1\leq j \leq k}\lbrace |f_{j}(\lambda)|\rbrace, \quad \text{ for }\lambda \in \mathbb{D}(\epsilon),
\end{equation*} and let $$S_j=\lbrace \lambda \in \mathbb{D}(\epsilon): u(\lambda)=|f_j(\lambda)| \rbrace.$$ We claim that it is possible to find a pair of indices $p,q\in \lbrace 1,...,k\rbrace$ with $p\neq q$ and a sequence $\lbrace \lambda_n \rbrace_{n=1}^\infty \subset S_p\cap S_q$ converging to zero.

Suppose the contrary. Since the functions describing first $k$ coordinates coincide at a point $0$, then for all pairs of indices $1\leq i,j \leq k$ we have $|f_i|=|f_j|$ on some subset $\Gamma_{ij}\subset \mathbb{D}(
\epsilon)$ with $0\in \Gamma_{ij}$ being an accumulation point. If there exists an index $p$ such that $|f_p|\geq|f_j|$ on $\mathbb{D}(\epsilon)$ for all $j\neq p$, then we are done. Otherwise for some $p\neq q$ we can pick two sequences $(\lambda_n)_{n=1}^\infty$ in $S_{p}$ and $(\mu_n)_{n=1}^\infty$ in $S_{q}$, both of them converging to $0\in \mathbb{D}(\epsilon)$. Let permute the indices such that $p=1, q=2$.

After possibly passing to a subsequence, any pair of points $\lambda_n$ and $\mu_n$ we can connect by an arc in $\mathbb{D}(\epsilon)$ in a way that any $n$-th arc does not meet the previous $n-1$ arcs. On every such $n$-th curve, by continuity argument, we can find a point $\nu_n$ such that for some $p\in \lbrace 2,...,k \rbrace$ we have $|f_1(\nu_n)|=|f_p(\nu_n)|\geq |f_j(\nu_n)|$ for all $j\neq 1,p$. Since there is only a finite number of coordinates, we can again pass to a subsequence if necessary, and find an index $i_0$ such that $|f_1(\nu_n)|=|f_{i_0}(\nu_n)|$ for all $(\nu_n)_{n=1}^\infty$. This proves the claim.

Now we have two possibilities.

\textbf{Case one:} There is an index $j_0$ such that we have $f_1(\nu_n)=\omega_n f_{j_0}(\nu_n)$ for $E:=\lbrace \omega_n:n\in \mathbb{N}\rbrace$ being an infinite subset of $\mathbb{T}$ and such that $1\in \mathbb{T}$ is an accumulation point of $E$. After a possible permutation of coordinates we may assume that $j_0=2$.

Consider the mappings
\begin{align*}
    F_{\omega_n}:&\mathbb{D}^n\to \mathbb{D}
    \\ &z\mapsto \frac{z_1+\overline{\omega_n}z_2}{2}.
\end{align*}
For any $\omega_n \in E$ we can take a point $a=(a_1,\omega_n a_1, a_3,...,a_n):=(f_1(\nu_n),...,f_n(\nu_n))$ with $|a_1|\geq |a_j|$ for all $j=3,...,n$ and there is a geodesic
$$\lambda \mapsto \left(\lambda, \omega_n \lambda, \frac{a_3}{a_1}\lambda,...,\frac{a_n}{a_1}\lambda \right)$$
in $\mathbb{D}^n$ passing through $0$ and $a$. Now, $F_\omega$ is a left inverse to the Kobayashi extremal through these points and by Theorem \ref{carpick} we obtain that $\mathbb{T}\subset \overline{F_{\omega_n} (\mathcal{V})}$. Therefore $\mathbb{T}\times E \subset \pi_{2} (\overline{\mathcal{V}})$. On the other hand, the variety $\mathcal{V}$ is one-dimensional and algebraic, so over any point $\zeta$ in $\mathbb{T}$ (except perhaps some zero-dimensional singular set) there is only a finite number of points lying over $\zeta$. Hence we get a contradiction and so $S \subset \lbrace z_1=...=z_k\rbrace$.

\textbf{Case two:} There is an index $j_0$ and a sequence $(\nu_n)_{n=1}^\infty$ such that $f_1=f_{j_0}$ on that sequence and hence on the whole $\mathbb{D}(\epsilon)$. Then we have $S\cap \mathbb{B}_w(t_0) \subset \lbrace z_1=z_{j_0} \rbrace$ for a suitable $t_0>0$. Here $\mathbb{B}_w(t_0)$ is a ball centered at $w$. Since $S$ is irreducible, we have from Lemma \ref{identity} that in fact $S\subset \lbrace z_1=z_{j_0}\rbrace$. 

Choose all $j\in \lbrace 2,...,n\rbrace$ for which we can find such a sequence $(\nu_n)_{n=1}^\infty$ where $f_1(\nu_n)=f_j(\nu_n)$ for all $n$ and repeat the above argument. After a possible permutation of indices we get $S\subset \lbrace z_1=...=z_l\rbrace$. Now, if $l=k$ then we have finished. However, if $l<k$ then for some $j_0\in \lbrace l+1,...,k\rbrace$ we would find ourselves in a situation from Case one, which leads to the contradiction.
\end{proof}

\begin{lemma}\label{lem0}
If $\mathcal{V}$ is an algebraic subset of $\mathbb{D}^n$ of dimension one that satisfies the polynomial extension property, then locally $\mathcal{V}$ is a graph of a holomorphic function.
\end{lemma}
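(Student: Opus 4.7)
The goal is to show that near each $w\in\mathcal V$ some coordinate projection realizes $\mathcal V$ as the graph of a holomorphic map from a disc into $\mathbb D^{n-1}$. By the automorphism invariance of the hypotheses I first reduce to $w=0$. Since $\dim \mathcal V = 1$, I can pick a coordinate (after permutation, call it $z_1$) such that $0$ is isolated in $\mathcal V\cap\pi_1^{-1}(0)$, and Lemma \ref{cosik} then supplies a polydisc $U=U'\times U''$ centered at $0$ on which $\pi_1\colon \mathcal V\cap U\to U'$ is proper. Proposition \ref{proper1} upgrades this to a locally biholomorphic $p$-sheeted branched cover outside a discrete subset $\mathcal Y\subset U'$. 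If for some choice of projection the sheet number equals $1$, then Proposition \ref{loj} promotes the projection to a biholomorphism, realizing $\mathcal V\cap U$ as the graph of its inverse, so the whole problem reduces to arranging $p=1$.

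At a regular point this is automatic: the tangent line $T_0\mathcal V\subset\mathbb C^n$ is one-dimensional and nonzero, so at least one of its components is nonzero, and projecting onto that coordinate gives $p=1$ by the inverse function theorem. The actual content of the lemma is therefore the absence of singular points. I would proceed by contradiction: if $0$ is singular, the germ $(\mathcal V,0)$ splits into at least two irreducible branches $S_1,S_2,\ldots$. Picking a regular point $b$ on some branch $S_j$ close to $0$, the pair $(0,b)$ is $k$-balanced for some $k$; whenever $k\geq 2$, Lemma \ref{lem2} pins the component containing $b$ inside a diagonal subspace $\{z_{i_1}=\cdots=z_{i_k}\}$. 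Repeating this for regular points on each branch and combining with the connectedness of $\overline{\mathcal V}$ from Lemma \ref{connectedV} should force the distinct branches to lie in mutually incompatible diagonals and produce the desired contradiction.

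The principal obstacle is the residual configuration in which every nearby regular point $b$ on every branch is only $1$-balanced with $0$ — a single coordinate strictly dominates — so that Lemma \ref{lem2} does not directly apply. To handle it I would first shift the basepoint by an automorphism of $\mathbb D^n$ in order to convert the $1$-balanced configuration into a $k$-balanced one with $k\geq 2$, possibly invoking Lemma \ref{balanced} to extract an $n$-balanced disc sitting inside $\mathcal V$ that pins down the global geometry. The careful combinatorics of which branch lands in which diagonal subspace after these reductions is where the main bookkeeping effort will be concentrated.
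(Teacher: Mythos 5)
Your reduction to showing $p=1$ for a coordinate projection, and your identification of Lemma \ref{lem2} plus the balanced-pair machinery as the engine, match the paper's strategy. But there are two genuine gaps. First, your case split rests on the claim that if $0$ is a singular point then the germ $(\mathcal V,0)$ has at least two irreducible branches. That is false: an irreducible germ can be singular (a cusp such as $\lambda\mapsto(\lambda^2,\lambda^3,0,\dots,0)$ has one branch but is a $2$-sheeted cover over every coordinate), so your tangent-line argument does not dispose of the ``one branch'' case and your contradiction hypothesis does not cover it. The paper handles exactly this in Sublemmas \ref{lem3} and \ref{lem4}: assuming the covering number $d>1$, it takes two distinct points $w\neq w'$ in the same fiber, applies the automorphism sending $w$ to $0$, and finds a $k$-balanced point on a curve joining the images, which Lemmas \ref{balanced} and \ref{lem2} forbid. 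You need this step whether or not the germ is irreducible.

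Second, the endgame for the multi-branch case is not supplied. ``Mutually incompatible diagonals'' is not by itself a contradiction --- two branches can happily sit in $\{z_1=z_2\}$ and $\{z_1=z_3\}$, or each can be a graph dominated by a different single coordinate so that Lemma \ref{lem2} never applies to the pair $(0,b)$. You correctly flag this last configuration as the principal obstacle, but the fix you sketch (recenter so that $(0,b)$ becomes $k$-balanced) does not work as stated: Lemma \ref{lem2} needs both points of the balanced pair to lie in $\mathcal V$, so after recentering the relevant pairs are \emph{cross pairs} of nonzero points, one on each branch. The paper's resolution is Sublemma \ref{lem5}, which shows that two distinct graph-discs through $0$ dominated by different coordinates always contain a cross pair that is at least $2$-balanced, paired with a separate computation (choosing $\lambda_1=\lambda_0 g_1(\lambda_0)$ so that $m(\lambda_1,\lambda_0 g_1(\lambda_0))=0$) showing that no cross pair can be $k$-balanced for any $k\ge 2$. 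Neither half of this pincer appears in your proposal, and without both the argument does not close.
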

The local character of the Lemma \ref{lem0} allows us to make some simplifications. It is enough to prove that $\mathcal{V}$ is smooth at $0\in \mathcal{V}$. Moreover, any analytic set can be decomposed to its irreducible components and locally we can choose finite number of them. Therefore we can choose $t>0$ for which all points in $\mathcal{V}\cap \mathbb{B}(t)\setminus \lbrace 0 \rbrace$ are regular and we can write $\mathcal{V}\cap \mathbb{B}(t)=U_1\cup ... \cup U_s$ for some $s\in \mathbb{N}$, where $U_j$ are irreducible components passing through zero (and hence $U_i\cap U_j=\lbrace 0 \rbrace$ whenever $i\neq j$).

We will prove the Lemma \ref{lem0} in a batch of sublemmas. 

\begin{sublemma}\label{lem3}
If there exists a point $w\in \mathcal{V}\cap \mathbb{B}(t)$ such that $|w_1|>|w_j|$ for $j=2,...,n$, then the irreducible component of $\mathcal{V}\cap \mathbb{B}(t)$ passing through $0$ and $w$ is a graph $\lbrace (\lambda, \lambda f_2(\lambda),...,\lambda f_n(\lambda)):\lambda \in \mathbb{D}(\epsilon)\rbrace$, where $f_j$ are in the open unit ball of $H^\infty (\mathbb{D}(\epsilon))$.
\end{sublemma}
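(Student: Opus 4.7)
The plan is to study the normalization of the irreducible component $S$ at the origin. Since $S$ is one-dimensional and irreducible, with $0$ as the only possible singular point, it admits a holomorphic parametrization $\mu\mapsto(g_1(\mu),\ldots,g_n(\mu))$ on a disc $\mathbb{D}(\delta)$, with $g_j(0)=0$ and biholomorphic off $\{0\}$. The sublemma will follow from the inequalities
\[
g_1'(0)\neq 0 \qquad \text{and} \qquad |g_j'(0)|<|g_1'(0)| \quad\text{for } j\geq 2,
\]
since reparametrizing by $\lambda := g_1(\mu)$ then presents $S$ as a graph $\{(\lambda,h_2(\lambda),\ldots,h_n(\lambda))\}$ with $h_j(0)=0$; writing $h_j(\lambda)=\lambda f_j(\lambda)$ gives $|f_j(0)|=|g_j'(0)/g_1'(0)|<1$, and shrinking $\epsilon$ yields $\|f_j\|_{H^\infty(\mathbb{D}(\epsilon))}<1$.

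I would argue the inequalities by contradiction, supposing some $j_0\geq 2$ satisfies $|g_{j_0}'(0)|\geq|g_1'(0)|$ (which subsumes the case $g_1'(0)=0$). Inspecting leading terms of the parametrization---and using the maximum modulus principle applied to the extended ratio $g_{j_0}/g_1$ in the borderline case of equality---I would produce a point $z^\ast\in S\setminus\{0\}$ near $0$ with $|z^\ast_{j_0}|>|z^\ast_1|$. As $S\setminus\{0\}$ is connected (image of $\mathbb{D}(\delta)\setminus\{0\}$ under the parametrization), I can join $w$ (where $|z_1|$ strictly dominates all $|z_j|$) to $z^\ast$ by a continuous path inside $S\setminus\{0\}$; continuity of $(\max_{j\geq 2}|z_j|)-|z_1|$ along the path locates an intermediate point $w''\in S$ with $|w''_1|=\max_j|w''_j|$, the maximum being attained by $k'\geq 2$ coordinates. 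After permuting coordinates $2,\ldots,n$ so that the first $k'$ realize the maximum, a coordinatewise phase rotation $\sigma\in\text{Aut}(\mathbb{D}^n)$ makes $\sigma(w'')_1=\cdots=\sigma(w'')_{k'}$ while preserving all moduli (so $|\sigma(w)_1|>|\sigma(w)_j|$ for $j\geq 2$ still).

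The contradiction now splits on $k'$. When $2\leq k'\leq n-1$, Lemma \ref{lem2} applied to $\sigma(\mathcal{V})$ with the pair $(0,\sigma(w''))$ forces $\sigma(S)\subset\{z_1=\cdots=z_{k'}\}$, whence $\sigma(w)_1=\sigma(w)_2$ and $|w_1|=|w_2|$---contradicting $|w_1|>|w_2|$. When $k'=n$, the pair $(0,\sigma(w''))$ is $n$-balanced; Lemma \ref{balanced} provides an $n$-balanced disc $\{(h_{v_1}(\lambda),\ldots,h_{v_n}(\lambda))\}$ in $\sigma(\mathcal{V})$ through both $0$ and $\sigma(w'')$, and Lemma \ref{identity} (invoked at a regular point shared by the disc and $\sigma(S)$) identifies this disc with $\sigma(S)$. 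Post-composing $\sigma$ with the automorphism straightening the disc to $\{(\lambda,\ldots,\lambda)\}$ yields $\Psi\in\text{Aut}(\mathbb{D}^n)$ with $\Psi(w)$ on the diagonal; coordinatewise preservation of the Möbius distance by $\Psi$ then forces $|w_1|=\cdots=|w_n|$, again a contradiction. The main obstacle I foresee lies in producing $z^\ast$ in the borderline equality case, and in the degenerate situation where all $g_j$ vanish to order $\geq 2$ at $0$ with $g_1$ attaining the minimum order---the leading-term analysis alone does not locate $z^\ast$ there, and one likely needs the branched-cover structure of $\pi_1|_S$ supplied by Lemma \ref{cosik} and Proposition \ref{proper1}, combined with the extension property, to rule this out.
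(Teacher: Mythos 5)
Your reduction is sound as far as it goes: if $g_1'(0)\neq 0$ and $|g_j'(0)|<|g_1'(0)|$ for $j\geq 2$, the sublemma follows, and your path/continuity argument correctly disposes of the case where some nonzero point of $S$ near $0$ satisfies $|z_{j_0}|\geq|z_1|$ --- this is essentially the same balanced-pair machinery the paper itself uses via Lemmas \ref{lem2} and \ref{balanced}. But the obstacle you flag at the end is a genuine gap, and it is exactly the hard case. Nothing in your argument rules out a parametrization such as $\mu\mapsto(\mu^2,\tfrac12\mu^3,0,\dots,0)$: every nonzero point of this curve near the origin still has $|z_1|$ strictly dominant, so no balanced point ever appears on your path and no contradiction arises, yet the curve is not a graph over $z_1$ (the projection $\pi_1$ is two-to-one and the origin is a cusp). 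Producing your point $z^\ast$ from leading terms is impossible here because no such $z^\ast$ exists; the branching has to be detected by a different mechanism, and "combine with the extension property" is precisely the step you have not supplied.

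The paper closes this case as follows. It first shows $\pi_1^{-1}(0)\cap S=\{0\}$ (a point $(0,x_2,\dots,x_n)\in S$ with some $x_j\neq 0$ would, along a curve to $w$, produce a pair with $0$ that is at least $2$-balanced, contradicting Lemma \ref{lem2}); then Lemma \ref{cosik} and Proposition \ref{proper1} make $\pi_1|_{S\cap U}$ a $d$-sheeted branched cover with branching only at $0$. If $d>1$, there is a second point $w'=(w_1,w_2',\dots,w_n')\neq w$ in the fiber over $w_1$. The key move --- the one missing from your proposal --- is to apply the coordinatewise M\"obius automorphism $\varphi$ centered at $w$: it sends $w$ to $0$ and $w'$ to $w''=(0,h_{w_2}(w_2'),\dots,h_{w_n}(w_n'))\neq 0$, whose \emph{first} coordinate now vanishes. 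A curve in $\varphi(S)$ from $\varphi(0)=w$ (where the first coordinate strictly dominates) to $w''$ (where it is dominated), chosen to avoid the new origin, must pass through a point forming a $k$-balanced pair with $0=\varphi(w)\in\varphi(\mathcal{V})$ for some $k\geq 2$, and Lemmas \ref{balanced} and \ref{lem2} applied to $\varphi(\mathcal{V})$ (which retains the extension property) yield the contradiction. In short, the balanced-pair dichotomy must be exploited not only relative to the base point $0$ but also relative to a second point of the same $\pi_1$-fiber, recentred by an automorphism; without this step your argument does not establish $g_1'(0)\neq 0$.
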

\begin{proof} 

Take the projection to the first variable \begin{align*}
    \pi_1:\mathbb{D}^n&\to \mathbb{D},\\
    z&\mapsto z_1.
\end{align*}  
Let $S$ be the irreducible component of $\mathcal{V}$ passing through $0$ and $w$. Note, that $\pi_1^{-1}(0)\cap S=\lbrace 0 \rbrace$. Otherwise we could find point $x=(0,x_2,...,x_n)\in S$ with at least one $x_j\neq 0$. Since $S$ is irreducible, we could connect $x$ and $w$ by a curve on which there would be a point that forms with zero a pair which is at least $2$-balanced. Then Lemma \ref{lem2} would give a contradiction.

Now, it follows from Lemma \ref{cosik} and Proposition \ref{proper1} that there exists a polydisc $U$ such that $\pi_1|_{U\cap S}$ is an analytic $d$-sheeted covering with at most $0$ being the critical point. If $d=1$ then by the analytic graph theorem the irreducible component $S$ is a graph of holomorphic function. 

Suppose for the sake of contradiction that $d>1$. We shall note that there is no nonzero point $x \in S\cap \lbrace |z_1|=...=|z_n| \rbrace$. Indeed, if there is one, then we can choose such a point $x$ and a curve $\gamma:[0,1]\to S$ connecting $w=\gamma(0)$ and $x=\gamma(1)$, and such that $\gamma (t) \notin S\cap \lbrace |z_1|=...=|z_n| \rbrace$ for any $t\in [0,1)$. Then we apply Lemma \ref{balanced} and we get $n$-balanced disc through $x$ additionally to the curve $\gamma$, which contradicts the assumption that $x$ is a regular point. 

Therefore we can assume that there is no point in $S\setminus \lbrace 0 \rbrace$ with all the coordinates equal on modulus. The Lemma \ref{lem2} allows us additionally to note that there is also no $k$-balanced point for $2\leq k\leq n-1$. So we can see that $|z_1|>|z_j|$, $j=2,...,n$ for all nonzero $z$ in $S$. Otherwise, by a continuity argument we would find a nonzero point $x\in S$ such that $|x_1|=|x_j|$ for some $j\in \lbrace 2,...,n\rbrace$ and this as we noted is not possible.

Since we assumed that $d>1$, we can find another point $w'$ different than $w$ in the fiber $\pi_1^{-1}(w_1)\cap S$, with some coordinates $w'=(w_1,w_2',...,w_n')$. Let $\varphi$ be the automorphism which maps
$$z\mapsto (h_{w_1}(z_1),...,h_{w_n}(z_n)),$$
where $h_{w_j}$ is a M\"obius map. Then
\begin{align*}
    w &\text{ is interchanged with } 0;\\
    w' &\text{ is mapped to } w'':=(0,h_{w_2}(w_2'),...,h_{w_n}(w_n')).
\end{align*}

Now, all points except at most $w$ are regular in $\varphi (S)$. Let $\gamma:[0,1]\to \varphi (S)$ be a curve that joins $\gamma(0)=w$ and $\gamma(1)=w''$. We can choose $\gamma$ such that it does not pass through zero. It is easy to see that there has to be a point $t_0\in (0,1)$ such that $\gamma (t_0)$ is $k$-balanced for some $k\geq 2$. Again Lemma \ref{balanced} and Lemma \ref{lem2} give a contradiction.

Hence near zero $S$ is a graph of a holomorphic function. Additionally we have shown that there is no $k$-balanced point for any $k\geq 2$, which means that $|f_j|<1$ on $\mathbb{D}(\epsilon)$ for all $j=2,...,n$. 
\end{proof}

\begin{sublemma}\label{lem4}
Fix $k \in \lbrace 2,..., n-1\rbrace$. Suppose there is a point $w\in \mathcal{V}\cap \mathbb{B}(t)$ such that $|w_1|=...=|w_k|>|w_j|$ for $j\in \lbrace k+1,...,n\rbrace$. Then the irreducible component passing through $0$ and $w$ is a graph $\lbrace (\lambda,\omega_2 \lambda,...,\omega_k \lambda, \lambda f_{k+1}(\lambda),...,\lambda f_n(\lambda)):\lambda \in \mathbb{D}(\epsilon)\rbrace$, where $f_j\in H^\infty (\mathbb{D}(\epsilon))$ and $\omega_j\in \mathbb{T}$.
\end{sublemma}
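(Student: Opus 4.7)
Since $|w_1|=\cdots=|w_k|$, set $\omega_j:=w_j/w_1\in\mathbb{T}$ for $j=2,\dots,k$ and consider the automorphism $\varphi(z)=(z_1,\bar\omega_2 z_2,\dots,\bar\omega_k z_k,z_{k+1},\dots,z_n)$. It sends $w$ to $\tilde w=(w_1,\dots,w_1,w_{k+1},\dots,w_n)$ without affecting any of the hypotheses, so Lemma \ref{lem2} applies and forces the irreducible component $\tilde S=\varphi(S)$ through $0$ and $\tilde w$ into $\{z_1=\cdots=z_k\}$. The plan is to run the projection-and-covering argument from Sublemma \ref{lem3} on $\tilde S$ inside this diagonal, with $\pi_1\colon z\mapsto z_1$, and finally pull back by $\varphi^{-1}$.

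\textbf{Paragraph 2 (no balanced behaviour on $\tilde S\setminus\{0\}$).} I first claim that every $z\in\tilde S\setminus\{0\}$ satisfies $|z_1|>|z_j|$ for $j>k$. Otherwise, a path on the connected set $\tilde S\setminus\{0\}$ linking $\tilde w$ to a misbehaving point passes through a first point $p$ at which $|p_1|=|p_{j_0}|$ for some $j_0>k$ and $|p_1|\ge|p_j|$ for every $j$; combined with $z_1=\cdots=z_k$ on $\tilde S$, this makes $p$ a $k'$-balanced point with $k+1\le k'\le n$. When $k'\le n-1$, a further unimodular scaling aligns the $k'$ dominant coordinates of $p$ and Lemma \ref{lem2} drops $\tilde S$ into a strictly smaller diagonal subspace, forcing $|z_{j_0}|\equiv|z_1|$ on $\tilde S$ and contradicting $|w_{j_0}|<|w_1|$. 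When $k'=n$, Lemma \ref{balanced} produces an $n$-balanced disc $D\subset\mathcal V$ through $p$; regularity of $p$ in $\mathcal V$ makes the germs $(\tilde S)_p$ and $(D)_p$ agree, and Lemma \ref{identity} upgrades this to $\tilde S=D$, again contradicting $|w_{k+1}|<|w_1|$. The claim immediately gives $\pi_1^{-1}(0)\cap\tilde S=\{0\}$, so Lemma \ref{cosik} and Proposition \ref{proper1} produce a $d$-sheeted analytic cover $\pi_1|_{\tilde S\cap U}\colon\tilde S\cap U\to U'$.

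\textbf{Paragraph 3 (sheet count and conclusion).} To eliminate $d>1$, I repeat the Möbius step from Sublemma \ref{lem3}. A second point $w'\in\pi_1^{-1}(w_1)\cap\tilde S$ must have the form $(w_1,\dots,w_1,w'_{k+1},\dots,w'_n)$ by the slice constraint, so the automorphism $\psi(z)=(h_{w_1}(z_1),\dots,h_{w_1}(z_k),h_{w_{k+1}}(z_{k+1}),\dots,h_{w_n}(z_n))$ preserves $\{z_1=\cdots=z_k\}$, sends $\tilde w$ to $0$, and sends $w'$ to $w''=(0,\dots,0,h_{w_{k+1}}(w'_{k+1}),\dots,h_{w_n}(w'_n))$. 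A path in $\psi(\tilde S)$ from $0$ to $w''$ avoiding $\psi(0)$ again yields, via the continuity argument of Paragraph 2, a balanced point violating Lemma \ref{lem2} or Lemma \ref{balanced}. Hence $d=1$, Proposition \ref{loj} promotes $\pi_1|_{\tilde S\cap U}$ to a biholomorphism, $\tilde S$ is locally $\{(\lambda,\dots,\lambda,g_{k+1}(\lambda),\dots,g_n(\lambda)):\lambda\in\mathbb D(\epsilon)\}$ with $g_j(0)=0$, and writing $g_j(\lambda)=\lambda f_j(\lambda)$ and pulling back by $\varphi^{-1}$ restores the phases $\omega_2,\dots,\omega_k$ to yield the parametrisation in the statement. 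The most delicate step is the $k'=n$ subcase of Paragraph 2: the $n$-balanced disc supplied by Lemma \ref{balanced} a priori lives only in $\mathcal V$, and one must identify it with the specific component $\tilde S$ using regularity of $p$ together with Lemma \ref{identity}.
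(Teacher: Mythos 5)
Your overall strategy coincides with the paper's: normalise the phases, use Lemma \ref{lem2} to push the component into $\{z_1=\cdots=z_k\}$, and then rerun the covering argument of Sublemma \ref{lem3} there. (The paper's own proof is terser -- it simply observes that inside the diagonal one is back in the situation of Sublemma \ref{lem3} -- so your Paragraphs 1 and 2 supply detail the paper leaves implicit, and they are correct; in particular your treatment of the $k'=n$ subcase via Lemma \ref{identity} is a legitimate, slightly cleaner variant of the paper's ``contradicts regularity'' phrasing.)

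There is, however, a concrete error in Paragraph 3: you take the path in $\psi(\tilde S)$ from $0=\psi(\tilde w)$ to $w''=\psi(w')$ while \emph{avoiding} $\psi(0)=\tilde w$. This is exactly backwards. The intermediate-value argument of Paragraph 2 needs a starting point at which $|z_1|>\max_{j>k}|z_j|$ holds strictly with $|z_1|>0$; at your starting point $0$ all coordinates vanish, so the inequality degenerates and the ``first point of failure'' may be the origin itself (or the argument may never launch, since nothing is known about the sign of $|z_1|-\max_{j>k}|z_j|$ on $\psi(\tilde S)$ near $0$). Worse, the one point of $\psi(\tilde S)$ where the strict inequality \emph{is} known, namely $\psi(0)=\tilde w$, is precisely the point you exclude from the path. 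The correct choice (and the one in the paper's Sublemma \ref{lem3}) is the path from $\psi(0)=\tilde w$, where $|z_1|=\cdots=|z_k|>|z_j|$ for $j>k$, to $w''$, whose first $k$ coordinates vanish while some later coordinate does not, chosen to avoid the origin of the new coordinates; then the first $t_0$ with $\max_{j>k}|\gamma(t_0)_j|=|\gamma(t_0)_1|$ gives a nonzero regular $k'$-balanced point with $k'\geq k+1$, and Lemma \ref{lem2} or Lemma \ref{balanced} yields the contradiction. With the endpoints swapped in this way the rest of your Paragraph 3, including the passage through Proposition \ref{loj} and the factorisation $g_j(\lambda)=\lambda f_j(\lambda)$, goes through.
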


\begin{proof}
The proof is essentially the same as the proof of Sublemma \ref{lem3}, so we just make few additional observations.

It follows from Lemma \ref{lem2} that in the component $S$ containing $w$ there is no $s$-balanced points with $k<s\leq n-1$. 

Take an automorphism which multiplies every of the first $k$ coordinates by suitable unimodular constants, so we are allowed to assume that $w_1=...=w_k$. Therefore the unique component $S$ passing through $0$ and $w$ is a subset of $\lbrace z_1=...=z_k \rbrace$. Take the projection $\pi_k$ to the first $k$ variables, which again we can assume to be proper when restricted to $S\cap U$ for $U$ being some polydisc.

We need only to show, that the projection is single sheeted. Suppose then that there is another point $w'=(w_1,...,w_1,w_{k+1}',...,w_n')\in \pi_k^{-1}((w_1,...,w_1))\cap S \cap U$, different than $w$. Since $S$ is contained in $\lbrace z_1=...=z_k\rbrace$, we have already reduced the problem to the situation from the Sublemma \ref{lem3}, so we can apply it to get the result.
\end{proof}

\begin{sublemma}\label{lem5}
Suppose $\mathcal{V}$ contains two discs $\mathcal{D}_1:=\lbrace (\lambda f_1(\lambda),\lambda,\lambda f_3(\lambda),...,\lambda f_n(\lambda)): \lambda \in \mathbb{D}(\epsilon)\rbrace$ and $\mathcal{D}_2:=\lbrace (\lambda, \lambda g_2(\lambda),...,\lambda g_n(\lambda)): \lambda \in \mathbb{D}(\epsilon)\rbrace$, where all $f_j,g_j$ are in the closed unit ball in $H^\infty({\mathbb{D}}(\epsilon))$. Suppose there is no open set $U$ such that $\mathcal{D}_1\cap U=\mathcal{D}_2\cap U$. Then in arbitrarily small neighbourhood of the origin we can find two points $w\in \mathcal{D}_1\setminus \lbrace 0 \rbrace$ and $w'\in \mathcal{D}_2\setminus \lbrace 0 \rbrace$ such that $(w,w')$ forms a pair, which is at least $2$-balanced.
\end{sublemma}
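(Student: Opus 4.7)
The plan is to proceed by contradiction. Suppose the conclusion fails, so there exists a neighborhood $U$ of the origin in which no pair $(w,w')\in(\mathcal{D}_1\setminus\{0\})\times(\mathcal{D}_2\setminus\{0\})$ is $2$-balanced. I would parameterize $w=w(\lambda)$ and $w'=w'(\mu)$ as in the statement, pick $\epsilon'>0$ small enough that both parameterizations send $\mathbb{D}(\epsilon')$ into $U$, and set
\[
d_j(\lambda,\mu):=m(w_j(\lambda),w'_j(\mu))\qquad (j=1,\dots,n).
\]
Let $A:=\{(\lambda,\mu)\in\mathbb{D}(\epsilon')^2:w(\lambda)=w'(\mu)\}$, a proper analytic subset of complex codimension at least one. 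Then $\Omega:=(\mathbb{D}(\epsilon')\setminus\{0\})^2\setminus A$ is open and connected.

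The next step is to extract a globally constant argmax index on $\Omega$. Our assumption forces $\{j:d_j=\max_k d_k\}$ to be a singleton at every point of $\Omega$; since the open sets $\{d_j>d_k\text{ for all }k\neq j\}$ partition $\Omega$, connectedness leaves only one of them nonempty, producing an index $j_0$ with $d_{j_0}>d_k$ strictly on $\Omega$ and hence $d_{j_0}\geq d_k$ on the closure by continuity. I would then case-split on $j_0$; the case $j_0=1$ is representative. Passing $d_1\geq d_2$ to the boundary $\mu=0$ yields $|\lambda f_1(\lambda)|\geq|\lambda|$, so $|f_1|\geq 1$ on $\mathbb{D}(\epsilon')\setminus\{0\}$, and the maximum principle forces $f_1\equiv c_1$ with $|c_1|=1$. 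On the complex curve $\{\mu=c_1\lambda\}\cap(\mathbb{D}(\epsilon')\setminus\{0\})^2$ we have $d_1\equiv 0$, so strict positivity of $d_1$ off $A$ compels the whole curve to lie in $A$, i.e.\ $w(\lambda)=w'(c_1\lambda)$ identically. Matching the remaining coordinates then yields $g_2\equiv\overline{c_1}$ and $g_j(\mu)=\overline{c_1}f_j(\overline{c_1}\mu)$ for $j\geq 3$; after the substitution $\mu=c_1\lambda$ these relations show $\mathcal{D}_1=\mathcal{D}_2$ near the origin, contradicting the hypothesis. The cases $j_0=2$ and $j_0\geq 3$ proceed analogously: use the boundary $\lambda=0$ (or both boundaries) to force $g_2$ (respectively $f_{j_0}$ and $g_{j_0}$) to be unimodular constants, then rerun the zero-curve-inside-$A$ step.

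Because the contradiction is obtained for every sufficiently small $\epsilon'$, a $2$-balanced pair exists in an arbitrarily small neighborhood of $0$. The main obstacle I expect is the clean extraction of the constant index $j_0$: one must verify that $\Omega$ is genuinely connected (which hinges on $A$ being a proper analytic subset, hence nowhere dense), and that the strict inequality $d_{j_0}>d_k$ on $\Omega$ extends to a nonstrict one on the closure so that the maximum principle applies at the boundary. After these steps the derivation $\mathcal{D}_1=\mathcal{D}_2$ is a careful but essentially routine comparison of parameterizations.
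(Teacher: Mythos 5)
Your proof is correct, but it takes a genuinely different route from the paper's. The paper argues directly: assuming (after discarding the trivial case $f_1\equiv g_2\equiv 0$) that $f_1\not\equiv 0$, it picks $w\in\mathcal{D}_1$ and $x\in\mathcal{D}_2$ sharing the same first coordinate, applies the automorphism of $\mathbb{D}^n$ sending $x$ to $0$, and runs an intermediate-value argument along a curve in $\varphi(\mathcal{D}_1)$ from $\varphi(w)$ (whose first coordinate vanishes) to $\varphi(0)=x$ (whose first coordinate is dominant in modulus); the crossing point, paired with $x$, is the desired at-least-$2$-balanced pair. This is short and reuses the curve-plus-continuity technique from Sublemma \ref{lem3}. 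You instead argue by contradiction on the two-variable parameter space: connectedness of $(\mathbb{D}(\epsilon')\setminus\{0\})^2\setminus A$ (which does hold, since $A$ is a proper analytic subset --- note $w_2(\lambda)=\lambda$ rules out $A$ being everything) forces a single strictly dominant index $j_0$ for the coordinatewise M\"obius distances, the slices $\mu=0$ and $\lambda=0$ combined with the maximum principle force the relevant coefficient functions to be unimodular constants, and the vanishing of $d_{j_0}$ along a complex line then pushes that line into $A$, i.e.\ the discs coincide near $0$, contradicting the hypothesis. All the steps you flag as potential obstacles do go through (the argmax is a singleton on $\Omega$ precisely because distinct nonzero points that tie in two coordinates would be $2$-balanced, and the strict inequality passes to a nonstrict one on the closure since $\Omega$ is dense). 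Your version is longer but avoids automorphisms and curve-lifting, and it in fact proves the sharper dichotomy that either the discs agree on a neighbourhood of $0$ or \emph{every} sufficiently small pair of punctured discs contains a $2$-balanced pair; the paper's version is more economical and consistent with the style of the surrounding lemmas.
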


\begin{proof}
If both $f_1,g_2\equiv 0$, the result is obvious. We can assume at least one of them, say $f_1$ is not the zero function. Take some point $w=(w_1,...,w_n)\in \mathcal{D}_1$ with $w_2=\lambda_0$ and $w_j=\lambda_0 f_j(\lambda_0)$ for $j\in \lbrace 1,...,n\rbrace \setminus \lbrace 2 \rbrace$. Choose $\lambda_0$ so that $f_1(\lambda_0)\neq 0$. Then we pick another point $x=(x_1,...,x_n):=(\lambda_1,\lambda_1 g_2(\lambda_1),...,\lambda_1 g_n(\lambda_1))$ in $\mathcal{D}_2$ so it satisfies $\lambda_1=\lambda_0 f_1(\lambda_0)$. 

Take an automorphism $\varphi$ of the unit polydisc that interchanges $0$ and $x$:
$$z \overset{\varphi}{\mapsto} \left ( h_{x_1}(z_1),...,h_{x_n}(z_n) \right ).$$
Since any point in $\mathcal{D}_1$ can be joined with $0$ by a curve, we can also link $x$ with every point from $\varphi(\mathcal{D}_1)$. Take an appropriate curve $\gamma: [0,1]\to \varphi (\mathcal{V})$ with $\gamma(0)=\varphi (w)=(0,m_{x_2}(w_2),...,m_{x_n}(w_n))$ and $\gamma(1)=\varphi (0)=x$. By the continuity argument the curve $\gamma$ has to meet a point which is at least $2$-balanced.
\end{proof}

\begin{proof}[Proof of the Lemma \ref{lem0}]
The Lemma \ref{balanced}, Sublemma \ref{lem3} and Sublemma \ref{lem4} show that any irreducible component passing through $0$ in its neighbourhood is a graph of holomorphic function. Suppose that the variety $\mathcal{V}$ is a union of at least two graphs. After a permutation of coordinates $\mathcal{V}$ contains two discs:
\begin{align}\label{disc1}
\mathcal{D}_1&=\lbrace (\lambda,\lambda f_{2}(\lambda),...,\lambda f_n(\lambda)): \lambda \in \mathbb{D}(\epsilon)\rbrace;\\ 
\label{disc2}
\mathcal{D}_2&=\lbrace (\lambda g_1(\lambda),\lambda,\lambda g_3(\lambda),...,\lambda g_{n}(\lambda)): \lambda \in \mathbb{D}(\epsilon)\rbrace,
\end{align}
for $f_j, g_j$ being in closed unit ball of $H^\infty (\mathbb{D}(\epsilon))$ and with $0$ being the only point of intersection of these two discs.

Observe that none of points from $\mathcal{D}_1\setminus \lbrace 0 \rbrace$ forms $n$-balanced pair with any point of $\mathcal{D}_2 \setminus \lbrace 0 \rbrace$. Otherwise by Lemma \ref{balanced} there is $n$-balanced disc through these points contradicting the assumption, that the only intersection point of $\mathcal{D}_1$ and $\mathcal{D}_2$ is the zero point.

Suppose there is $k$-balanced pair $(x,y)\in \mathcal{D}_1\times \mathcal{D}_2$ for $2\leq k \leq n-1$. Choose $k$ that is the largest possible. Then by Lemma \ref{lem2}, after perhaps permutation of coordinates we have 
\begin{align}\label{inclusionhehe}
    \mathcal{D}_1\setminus \lbrace 0 \rbrace \subset \lbrace z:m(z_1,y_1)=...=m(z_k,y_k)>m(z_j,y_j)\text{ for }j=k+1,...,n\rbrace,\\
    \label{inclusionhehe2}
    \mathcal{D}_2 \setminus \lbrace 0 \rbrace \subset \lbrace w:m(x_1,w_1)=...=m(x_k,w_k)>m(x_j,w_j)\text{ for }j=k+1,...,n\rbrace.
\end{align}
This implies, that in fact any pair $(z,w)\in \mathcal{D}_1\times \mathcal{D}_2$ except $(0,0)$ is $k$-balanced. In particular if we take $z=0\in \mathcal{D}_1$, we see that $\lambda g_1(\lambda)\neq 0$ for all $\lambda \in \mathbb{D}(\epsilon)\setminus \lbrace 0 \rbrace$. Now, for some $\lambda_0\in \mathbb{D}(\epsilon)\setminus \lbrace 0 \rbrace$ choose $\lambda_1 \in \mathbb{D}(\epsilon)$ such that $\lambda_1=\lambda_0g_1(\lambda_0)$. Therefore $m(\lambda_1,\lambda_0g_1(\lambda_0))=0$ contradicting inclusions (\ref{inclusionhehe}) and (\ref{inclusionhehe2}). Hence there is no $k$-balanced point either.

Finally, recall that discs (\ref{disc1}) and (\ref{disc2}) intersect only at point zero. So, by Sublemma \ref{lem5} there is always a pair of point which is at least $2$-balanced, but this is impossible by what we just showed.

This finishes the proof.
\end{proof}

\begin{proof}[Proof of the Theorem \ref{thrm1}]We know already that $\mathcal{V}$ is smooth, i.e. free from singular points. Acting on $\mathcal{V}$ with an automorphism we lose no generality if we assume that $0\in \mathcal{V}$.

Suppose that $\mathcal{V}$ contains a $k$-balanced pair of points for $k\geq 2$ and take such a pair where $k$ is the largest possible. If $k=n$ then $\mathcal{V}$ contains a $n$-balanced disc $\mathcal{D}$ due to Lemma \ref{balanced} and that disc covers the whole $\mathcal{V}$, so Theorem \ref{thrmk} finishes the proof. Indeed, we can assume up to automorphism that $\mathcal{D}=\lbrace (\lambda,...,\lambda):\lambda \in \mathbb{D}\rbrace$. If there exists some point $x \in \mathcal{V}\setminus \mathcal{D}$, then for any point $w=(\lambda_0,...,\lambda_0)\in \mathcal{D}$, the pair $(x,w)$ is at most $k$-balanced for some $k\leq n-1$. By Lemma \ref{lem2} and after permutation of coordinates we have $m(x_1,\lambda)=...=m(x_k,\lambda)>m(x_j,\lambda)$ for $j=k+1,...,n$, for all $\lambda \in \mathbb{D}$. Taking $\lambda=x_1$ we get a contradiction.

Suppose now $k<n$. It follows again from Lemma \ref{lem2} that there is no $j$-balanced pair in $\mathcal{V}$ which is not $k$-balanced, for any $2\leq j<k$. Hence we can assume that
\begin{equation}\label{last}
\mathcal{V}\subset \lbrace z\in \mathbb{D}^n:|z_1|=...=|z_k|>|z_j|\ \text{for all } j=k+1,...,n\rbrace \cup \lbrace 0 \rbrace.    
\end{equation}

Take the projection to the first coordinate $\pi_1: \mathcal{V}\to \mathbb{D}$ which by virtue of the inclusion (\ref{last}) is proper. Recall that for any $a\in \mathcal{V}$ we have $\text{dim}_a\mathcal{V}=1$. By Proposition \ref{proper1} the projection $\pi_1$ is a local $p$-sheeted covering except at most discrete set of singular points, for some $p\in \mathbb{N}$. But the set of singularities is empty. Suppose now that $p>1$. Then since we assumed that zero is in $\mathcal{V}$, we can now pick two points $\lbrace 0,w\rbrace \in \pi_1^{-1}(0)\cap \mathcal{V}$ where $w=(0,w_2,...,w_n)\neq 0$. But this contradicts the assumption that for $\mathcal{V}$ the inclusion (\ref{last}) is satisfied, so in fact $p=1$.

Therefore $\pi_1:\mathcal{V}\to \mathbb{D}$ is an one-to-one map, so by Proposition \ref{loj} it is a biholomorphism. Now, Theorem \ref{thrmk} finishes the proof.
\end{proof}

\textbf{Remarks.} If we strengthen the assumptions in Theorem 2, demanding that the variety $\mathcal{V}$ has the $H^{\infty}$-extension property instead of the polynomial one, then $\mathcal{V}$ is connected. Bearing in mind first three paragraphs of the \textit{Proof of the Theorem 2} we observe that 
$$\mathcal{V}\subset \lbrace{z\in \mathbb{D}^n:|z_1|>|z_j|, \text{ for all } j=2,...,n}\rbrace\cup \lbrace 0 \rbrace.$$

Define $f:=z_n/z_1\in H^\infty(\mathcal{V})$. There exists $F\in H^\infty (\mathbb{D}^n)$ an extension of $f$. Then $g:=z_1F$ is an nontrivial extension of $z_n|_{\mathcal{V}}$ so there are satisfied assumptions of the following theorem by Guo, Huang and Wang \cite{Guo}:
\begin{thrm}
Suppose $\mathcal{V}$ is a subset of $\mathbb{D}^n$ and $z_n|_{\mathcal{V}}$ has a nontrivial extension. Then $\mathcal{V}$ has the form $\mathcal{V}=\lbrace (z',f(z')):z'\in \pi_{n-1}(\mathcal{V}))\rbrace$, where $f$ is the unit ball of $H^\infty(\mathbb{D}^{n-1})$.
\end{thrm}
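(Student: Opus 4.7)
The plan is to apply Schwarz--Pick rigidity slice by slice. Let $F\in H^\infty(\mathbb{D}^n)$ denote the given nontrivial extension of $z_n|_{\mathcal{V}}$; I interpret ``nontrivial'' in the paper's sense that $\|F\|_\infty\leq 1$ and $F\not\equiv z_n$ (in the excerpt, $F=z_1\widetilde F$ with $\widetilde F$ extending $z_n/z_1$, which cannot equal $z_n$ since $z_n/z_1$ is not bounded across $\{z_1=0\}$). For each $z'\in\mathbb{D}^{n-1}$ set $\phi_{z'}(\zeta):=F(z',\zeta)$, a holomorphic self-map of $\overline{\mathbb{D}}$. The hypothesis $F|_{\mathcal{V}}=z_n|_{\mathcal{V}}$ says exactly that every element of the fibre $\mathcal{V}_{z'}:=\{\zeta\in\mathbb{D}:(z',\zeta)\in\mathcal{V}\}$ is a fixed point of $\phi_{z'}$.

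By the rigidity form of the Schwarz--Pick lemma, a holomorphic self-map of $\mathbb{D}$ with two distinct interior fixed points must be the identity. Hence for each $z'\in\pi_{n-1}(\mathcal{V})$, either $\mathcal{V}_{z'}$ is a singleton, or $\phi_{z'}=\mathrm{id}$, in which case the whole slice $\{z'\}\times\mathbb{D}$ lies in $\{F=z_n\}$. The exceptional locus $\Omega:=\{z'\in\mathbb{D}^{n-1}:\phi_{z'}=\mathrm{id}\}$ is the common zero locus of the Taylor coefficients of $F(z',\zeta)-\zeta$ regarded as a series in $\zeta$, hence analytic; and $F\not\equiv z_n$ forces $\Omega\subsetneq\mathbb{D}^{n-1}$.

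The main obstacle will be to show $\Omega\cap\pi_{n-1}(\mathcal{V})=\emptyset$, so that every fibre of $\pi_{n-1}|_{\mathcal{V}}$ is a singleton and $\mathcal{V}$ becomes a genuine graph. This step ought to rely on the specific provenance of $F$ as an extension with controlled norm, rather than an arbitrary disc self-map, through a Hartogs-type comparison between nearby generic fibres and any exceptional one. Granted the graph structure, I would construct $f$ on $\pi_{n-1}(\mathcal{V})$ by assigning to $z'$ the unique element of $\mathcal{V}_{z'}$; this coincides off $\Omega$ with the implicit solution of $F(z',f(z'))=f(z')$, which is holomorphic by the implicit function theorem (since $\partial_\zeta F(z',f(z'))\neq 1$ off $\Omega$). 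A removable-singularities argument across the thin analytic set $\Omega$ extends $f$ to all of $\mathbb{D}^{n-1}$, with $|f|\leq 1$ automatic from $f(z')\in\mathbb{D}$ pointwise.
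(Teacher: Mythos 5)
First, a point of reference: the paper does not prove this statement at all --- it is quoted from Guo, Huang and Wang \cite{Guo} and used as a black box in the closing remarks --- so there is no internal proof to compare against, and your proposal must stand on its own. It does not: the gap is exactly the one you flag yourself. You never show that $\Omega\cap\pi_{n-1}(\mathcal{V})=\emptyset$, i.e.\ that no slice map $\phi_{z'}$ with $z'\in\pi_{n-1}(\mathcal{V})$ is the identity; without this, a fibre of $\mathcal{V}$ could be an entire disc $\{z'\}\times\mathbb{D}$ and the graph structure collapses. Saying the step ``ought to rely on \ldots a Hartogs-type comparison'' is a placeholder, not an argument, and this step is the entire content of the theorem. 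Moreover the statement you reduce to ($\Omega$ a proper analytic subset) is strictly weaker than what is true and needed: in fact $\Omega=\emptyset$. To see this, note first that $F$ attains values in $\mathbb{D}$, so $F:\mathbb{D}^n\to\mathbb{D}$ and Schwarz--Pick applies in the $z'$-variables: if $F(z_0',\cdot)=\mathrm{id}_{\mathbb{D}}$ for some $z_0'$, then for every $z'\in\mathbb{D}^{n-1}$ and every $\zeta\in\mathbb{D}$,
\[
m\bigl(F(z',\zeta),\zeta\bigr)=m\bigl(F(z',\zeta),F(z_0',\zeta)\bigr)\le\max_{1\le j\le n-1} m(z_j',z_{0,j}')=:r<1,
\]
uniformly in $\zeta$. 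Since the pseudohyperbolic disc of radius $r<1$ about $\zeta$ has Euclidean diameter tending to $0$ as $|\zeta|\to1$, the holomorphic function $\zeta\mapsto F(z',\zeta)-\zeta$ tends to $0$ uniformly at the boundary and hence vanishes identically by the maximum principle. Thus $F\equiv z_n$, contradicting nontriviality; so $\Omega=\emptyset$ and every fibre of $\pi_{n-1}|_{\mathcal{V}}$ is a singleton by the two-fixed-point rigidity you correctly invoke.

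A second, smaller gap concerns producing $f$ in the unit ball of $H^\infty(\mathbb{D}^{n-1})$, i.e.\ holomorphic on \emph{all} of $\mathbb{D}^{n-1}$, not just on $\pi_{n-1}(\mathcal{V})$ (which is merely some subset, as $\mathcal{V}$ carries no analyticity hypothesis here). Your implicit-function-theorem step only continues the fixed point locally; the fixed point of $\phi_{z'}$ need not exist for $z'$ far from $\pi_{n-1}(\mathcal{V})$ (it can escape to $\partial\mathbb{D}$), and the nondegeneracy $\partial_\zeta F(z',f(z'))\neq 1$ itself requires the Schwarz-lemma observation that a non-identity self-map cannot have derivative $1$ at an interior fixed point. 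The removable-singularity step across $\Omega$ is vacuous once $\Omega=\emptyset$, but the genuine issue --- a global holomorphic selection of the fibre point --- is left unaddressed and is handled in \cite{Guo} by a separate argument.
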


\textbf{Acknowledgments.} I would like to express my gratitude to Łukasz Kosiński for his guidance and numerous suggestions.

 \textsc{\\ \\ \indent Institute of Mathematics, Jagiellonian University, Łojasiewicza 6, 30-348
Kraków, Poland. }

\textit{E-mail address:}  \href{mailto:krzysztof.maciaszek@im.uj.edu.pl}{krzysztof.maciaszek@im.uj.edu.pl} 
\end{document}